\documentclass[11pt]{article}

\usepackage{amsmath}
\usepackage{amsthm}
\usepackage{amsfonts}

\newtheorem{thm}{Theorem}[section]
\newtheorem{lemma}[thm]{Lemma}
\newtheorem{example}[thm]{Example}

\newtheorem{assertion}[thm]{Proposition}
\theoremstyle{definition}

\theoremstyle{remark}

\newcommand{\lin}{\lim_{n\to\infty}}
\newcommand{\liti}{\lim_{t\to\infty}}

\newcommand{\mmp}{\mathbb{P}}

\newcommand{\me}{\mathbb{E}}
\newcommand{\reals}{\mathbb{R}}
\newcommand{\mr}{\mathbb{R}}
\newcommand{\naturals}{\mathbb{N}}
\newcommand{\naturalso}{\mathbb{N}_0}

\author{
Alexander Gnedin\footnote{Department of Mathematics, Utrecht University, Postbus 80010, 3508 TA Utrecht, The Netherlands,
\;A.V.Gnedin@uu.nl},
Alexander Iksanov\footnote{Faculty of Cybernetics, National T. Shevchenko University of Kiev, 01033 Kiev, Ukraine,\; iksan@unicyb.kiev.ua},
Alexander Marynych\footnote{Faculty of Cybernetics, National T. Shevchenko University of Kiev, 01033 Kiev, Ukraine,\;marynych@unicyb.kiev.ua}
}
\title{The Bernoulli sieve: an overview}
\begin{document}
\maketitle
\begin{abstract}
The Bernoulli sieve is a version of the classical
balls-in-boxes
occupancy scheme, in which random frequencies of infinitely many
boxes are produced by a multiplicative random walk, also known as
the residual allocation model or stick-breaking.
We give an overview of  the limit theorems concerning
the number  of boxes occupied by some balls out of the first $n$ balls thrown, and present some
new results concerning  the number of empty boxes within the occupancy range.
\end{abstract}

\section{Introduction}
In a classical occupancy scheme $n$ balls are thrown
independently in an infinite array of boxes with probability $p_k$
of hitting box $k=1,2,\dots$, where $(p_k)_{k\in\naturals}$ is a
fixed sequence of positive frequencies summing up to one.
The quantities of traditional interest are
\begin{itemize}
\item $K_n$ the number of boxes occupied by at least one of $n$
balls,
\item $K_{n,r}$ the number of boxes occupied by exactly $r$
out of $n$ balls,
\item $M_n$  the range of occupancy, equal to the maximal index of occupied box,
\item
$L_n:=M_n-K_n$ the number of empty boxes within the occupancy range,
\item $Z_n$ the number of balls in the $M_n$th box.
\end{itemize}
%in what follows we will refer to them as to functionals.
In applications `boxes' are clusters, species, types of data etc.
%The series of occupancy counts follows multinomial distribution,
%and the
%The number of balls in each particular
%box $k$ is asymptotic to $np_k$ for $n\to\infty$.
The  quantities in the list characterise the sample variability, which for large $n$ is dominantly determined by
the boxes occupied by a few balls, thus  determined by the way  the frequencies $p_k$ approach zero as $k\to\infty$.
The first two  variables are functionals of the induced partition of $n$,
defined as the unordered collection
of  positive occupancy counts.

The {\it Bernoulli sieve} is a version of the occupancy scheme
with random frequencies
\begin{equation}\label{freq_def}
p_k:=W_1W_2\cdots W_{k-1}(1-W_k), \ \ k\in\naturals,
\end{equation}
where $(W_k)_{k\in\naturals}$ are independent copies of a random
variable $W$ taking values in $(0,1)$.
%For the Bernoulli sieve
%occupance scheme the following interpretation was proposed in
%\cite{Gne,GINR}.
The name derives from the following recursive construction based on i.i.d. $q_k=_d1-W$:
at round 1 a coin with probability $q_1$ for heads is flipped for each of $n$ balls and every time it turns heads
the ball is put in box 1, then at round 2 a coin with probability $q_2$ for heads is flipped
for each of the remaining balls
and every time it turns heads
the ball  is sent to box 2,
and so on until all balls are allocated in boxes.

It is useful to identify
frequencies {\rm (\ref{freq_def})} with the lengths of
component intervals induced by splitting $[0,1]$
at points visited by a multiplicative random walk
$(Q_k)_{k\in\naturals_0}$, where
$$Q_0:=1, \ \ Q_j:=\prod_{i=1}^j W_i, \ \ j\in\naturals.$$
In the spirit of Kingman's `paintbox representation' of exchangeable partitions \cite{GnePit}, we
may identify the boxes with open intervals
$(Q_k,Q_{k-1})$, and
mark the balls by independent points $U_1,\ldots,U_n$
sampled from the uniform $[0,1]$ distribution,
independently of $(Q_k)$.
The event $U_i\in (Q_{k-1},Q_k)$ then means that ball $i$ falls in box $k$.
Keep in mind that in the natural order the intervals are indexed from the right to the left,
thus the occupancy range is determined by the interval containing the leftmost mark $\min(U_1,\dots,U_n)$.

%  The event that
%ball $i$ falls
%the same box occurs iff points $U_i$ and $U_j$ hit the same component interval.

The Bernoulli sieve has nonrandom frequencies only when the  law of $W$ is a Dirac mass $\delta_p$ located at
some $p\in (0,1)$, the frequencies $p_k$ comprise then a geometric distribution.
%In the related setting of `leader election'
% problem one asks about the number of `conclusive  rounds' $K_n$,
%the number $K_n$ of occupied boxes in the Bernoulli sieve
%is equal to the number of conclusive rounds of the
%Assume that
%$n$ persons play the following game. In the first round of the
%game each of the players tosses once a coin with probability $p$
%for heads, and while those who throw tails are eliminated, those
%who obtain heads play the second round with the same probability
%$p$ of getting heads and so on until there are no players. If in
%some round all remaining players get heads, the round is deemed
%inconclusive and must be repeated, as many times as necessary
%until some players are eliminated. The leader election problem
%asks for
%the `probability of a single winner' $\mathbb{P}\{Z_n=1\}$,
%the number $M_n$ of rounds until the leader(s) is elected,  the number
%$L_n$ of drawn rounds and some other quantities.
Results for this case can be readily recast from the numerous studies on
sampling from the geometric distribution \cite{BrussGrubel,BrussOC,GohHitczenko,KirschProd} and related models like the
leader election algorithms \cite{BaryshEisenStengle,FMS_LEA,JS_LEA,ProdHTSL}, absorption sampling \cite{AbsSamplingDunkl,AbsSamplingKemp} etc.
It is known that asymptotic expansions of the moments  of $K_n, M_n$ and many other quantities
have a component that oscillates periodically on the $\log n$-scale with a small amplitude
\cite{FMS_LEA,LouchProd}.
The same applies to distributions of the $L_n$'s \cite{GohHitczenko,LouchProdGaps}.
There  are some peculiarities in the symmetric case $p=1/2$  \cite{FMS_LEA,ProdHTSL}.

The best analytically tractable case involves random factors having {\rm beta}$(\theta,1)$ density
${\mathbb P}\{W\in {\rm d}x\}=\theta x^{\theta-1}{\rm d}x$ on  $(0,1)$  with parameter $\theta>0$.
In this case the Bernoulli sieve may be viewed as a way to generate
a  random partition of $n$
which follows the multivariate distribution known as
the Ewens sampling formula \cite{ABT}.
This model  has been widely studied in connection with problems
of combinatorics, statistics and biology.
In particular,
the case $\theta=1$ of uniform factors is related to records and cycle patterns of random
permutations under the uniform distribution on the symmetric group.
It is well known  \cite{ABT} that
 $(K_n-\theta\log n)/(\theta\log n)^{1/2}$ is asymptotically normal,  and that the $K_{n,r}$'s converge jointly
to independent Poisson$(\theta/r)$ random variables.
These classical  results  are  complemented by the observation that $M_n$ exhibits the same
asymptotics of moments and distribution as $K_n$, and
the number of empty boxes
has the following surprising limit law:

\begin{thm}\label{emptybox}
{\rm \cite{GINR}}
If~\,$W$ has ${\rm beta}(\theta, 1)$ distribution then $L_n\to_d L_\infty$, where
%\begin{equation*}\label{genfun}
$L_\infty$ has probability generating function
$$\me s^{L_\infty}={\Gamma(1+\theta)\Gamma(1+\theta-\theta s)\over
\Gamma(1+2\theta-\theta s)}, ~~~s\in [0,1],$$
%\end{equation*}
which corresponds to a mixed Poisson distribution
with the parameter distributed like $\theta\,|\log (1-W)|$.
\end{thm}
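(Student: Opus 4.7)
The plan is to pass to logarithmic coordinates, so that boxes and balls become intervals and points of two independent Poisson point processes, and then compute the limit PGF directly from that picture. Set $X_i:=-\log U_i$ (i.i.d.\ $\mathrm{Exp}(1)$) and $T_k:=-\log Q_k$. Since $-\log W\sim\mathrm{Exp}(\theta)$ under $W\sim\mathrm{beta}(\theta,1)$, the points $(T_k)_{k\ge 1}$ form a Poisson process on $(0,\infty)$ of rate $\theta$, with $T_0=0$; ball $i$ lies in box $k$ iff $X_i\in(T_{k-1},T_k)$; the range $M_n$ satisfies $T_{M_n-1}<X_{(n)}<T_{M_n}$ for $X_{(n)}=\max_i X_i$; and $L_n$ counts the indices $k<M_n$ with $(T_{k-1},T_k)$ containing no $X_i$. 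Shifting both processes by $\log n$, classical extreme-value theory gives that $\sum_i\delta_{X_i-\log n}$ converges weakly to a Poisson point process $\xi$ on $\mr$ of intensity $e^{-s}\,ds$ with (Gumbel) maximum $G$, while $\sum_k\delta_{T_k-\log n}$ converges independently to a rate-$\theta$ Poisson point process $\eta$ on $\mr$. A continuity/localisation argument (using that the $\xi$-intensity blows up as $s\to-\infty$, so empty $\eta$-intervals arise only in an $O_\mmp(1)$ window around $G$) then yields $L_n\dod L_\infty$, where
\[
L_\infty=\#\bigl\{(\tau,\tau')\text{ consecutive }\eta\text{-points}:\ \tau'<G,\ \xi((\tau,\tau'))=0\bigr\}.
\]

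To compute $\me\,s^{L_\infty}$, I would condition on $G$ and on the largest $\eta$-point $\tau^*<G$; by Palm theory $G-\tau^*\sim\mathrm{Exp}(\theta)$ independently of the rest of $\eta$. Given these data, the indicators that the successive $\eta$-intervals below $\tau^*$ are $\xi$-empty are conditionally independent Bernoullis with probabilities $\exp(-(e^{-\tau_{j-1}}-e^{-\tau_j}))$, by the independence of $\xi$ on disjoint Borel sets. Passing to the coordinate $Z=e^{-s}$, $\xi$ becomes a standard Poisson process on $(0,\infty)$ and $\eta$ becomes a PPP on $(0,\infty)$ of intensity $\theta/Z\,dZ$ whose multiplicative increments are i.i.d.\ $\mathrm{Pareto}(\theta)$. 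Writing each Bernoulli probability as the void probability of an auxiliary Poisson process and integrating against the joint law of $(G,G-\tau^*)$, one arrives at
\[
\me\,s^{L_\infty}=\int_0^1(1-w)^{\theta(1-s)}\,\theta w^{\theta-1}\,dw=\frac{\Gamma(1+\theta)\,\Gamma(1+\theta-\theta s)}{\Gamma(1+2\theta-\theta s)}
\]
by a beta-gamma identity. Rewriting $(1-W)^{\theta(1-s)}=\exp(-\theta(1-s)|\log(1-W)|)$ identifies this as the PGF of a mixed Poisson with parameter $\theta|\log(1-W)|$.

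The hardest step will be the explicit evaluation of the conditional PGF: the infinite product over the Pareto-driven multiplicative $\eta$-increments does not telescope directly, so the auxiliary-Poisson-process trick---which recasts the product as a void/coverage probability in a combined coloured point process on $(1,\infty)$---is essential to reduce the computation to a Beta integral. As a sanity check, $\me L_\infty=\theta\,\me|\log(1-W)|$ can be obtained independently via Campbell's formula for $\eta$ together with a Frullani integral, matching the mixed-Poisson prediction. The weak-convergence and tightness steps should be routine, since the exponential blow-up of the $\xi$-intensity near $-\infty$ confines all relevant contributions to a bounded neighbourhood of $\log n$.
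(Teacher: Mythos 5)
Your structural set-up is sound and is essentially the route the survey itself points to (the point-process limit of Section 5, from [GIR]): in logarithmic coordinates the stick-breaking points become a rate-$\theta$ Poisson process $\eta$, the balls near the top converge to an independent Poisson process $\xi$ with intensity $e^{-s}\,ds$ whose maximum $G$ is Gumbel, and $L_n$ converges to the number of $\eta$-intervals lying entirely below $G$ that carry no $\xi$-point; your identification of that limit and your mean sanity check (Mecke/Campbell plus a Frullani integral gives $\me L_\infty=\theta\,\me|\log(1-W)|$) are both correct.

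The genuine gap is at the decisive step, the evaluation of $\me\,s^{L_\infty}$. Conditioning on $G$ and on $\tau^*$ does not reduce the problem to ``integrating against the joint law of $(G,G-\tau^*)$'': the conditional PGF is the infinite product $\prod_{j\ge 1}\bigl(1-(1-s)\exp\bigl(-(e^{-\tau_j}-e^{-\tau_{j-1}})\bigr)\bigr)$ over \emph{all} gaps of $\eta$ below $\tau^*$, i.e.\ over the whole multiplicative random walk with beta$(\theta,1)$ ratios, and the density $\theta w^{\theta-1}$ in the target integral is not the law of the first gap --- that gap governs $e^{-(G-\tau^*)}$, the (occupied, uncounted) box containing the maximum --- so the answer cannot come from the first gap alone. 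The ``auxiliary-Poisson-process trick'' that is supposed to collapse this product into the single Beta integral $\theta\int_0^1(1-w)^{\theta(1-s)}w^{\theta-1}\,dw$ is never specified, and no standard void/coverage identity does the job: writing $1-(1-s)e^{-\lambda_j}$ as the probability that box $j$ is hit by $\mathcal U$ or by an independent Bernoulli$(s)$ colouring merely restates the PGF. Thus the heart of the theorem --- that the limit is mixed Poisson with parameter $\theta|\log(1-W)|$ --- is asserted rather than proved. To complete the argument you would need either an exact computation of $\me\,s^{L_n}$ for the beta$(\theta,1)$ (Ewens-type) composition followed by a passage to the limit, as in the cited source [GINR], or a genuine calculation over the entire limiting configuration, e.g.\ solving the renewal-type equation $F(b)=\me\bigl[\bigl(1-(1-s)e^{-b(1/W-1)}\bigr)F(b/W)\bigr]$ obtained by peeling off one box at a time and then integrating $F$ against the law of the first stick-breaking point above the exponential minimum; neither is supplied. (The weak-convergence/localisation step you call routine is also exactly what [GIR] provides and deserves at least a tightness argument, but it is secondary to the missing computation.)
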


%In view of the connection stated above the same results hold for
%the small counts $K_{n,j}$ and the number $K_n$ of occupied
%intervals in the Bernoulli sieve.

Throughout we shall use the following notation for the moments
$$\mu:=\me |\log W|,\;\;\;\sigma^2:={\rm Var}\,(\log W),\;\;\;\nu:=\me |\log (1-W)|,$$
which may be finite or infinite. The {\it standing assumption} for
what follows is that the distribution of $|\log W|$ is non-lattice.
In particular, the case of sampling from the geometric distribution will be excluded.

 %but
%when the result holds in general case we will do the corresponding
%remark.

%Throughout, we shall also rely on the following alternative construction of the Bernoulli sieve which we will call {\it additive scheme}.
%Let $\{S_k : k\in\naturalso\}$ be a zero-delayed random walk with a step distributed like $-\log W$. For $E_1,E_2, \ldots E_n$ an independent random sample
%from the standard exponential distribution, also independent of $\{S_k : k\in\naturalso\}$, think
%of the event $E_j \in (S_{k-1},Sk)$ as a ball dropped in box $k$. Clearly, the equivalence with the Bernoulli sieve construction is established via
%the mapping $y\mapsto e^{-y},\; y > 0$. In what follows we will use $E_{1,n} \leq E_{2,n}\leq \ldots \leq E_{n,n}$ the order statistics of sample $E_1, \ldots E_n$.

\section{Markov chains and distributional recursions}

A random combinatorial structure which captures the occupancy of
boxes by $n$ indistinguishable balls
is the {\it weak composition} $C^*_n$
comprised of nonnegative integer parts
%$n_1,\ldots,n_l$
summing up to $n$.
The term {\it weak} composition means that zero parts are allowed, for
instance, the sequence $(2, 3, 0, 1, 0, 0, 1, 0, 0, 0, \ldots)$
(padded by infinitely many $0$'s) is a possible value of $C^*_7$.
A related structure which contains less information is a composition
$C_n$  obtained by discarding zero parts of $C^*_n$. Discarding further the
order of parts in $C_n$ yields a random partition of $n$.
The parts of $C^*_n$ can be represented
(see \cite[p. 452]{GnePit}) as the magnitudes  of jumps of a
time-homo\-geneous non\-increa\-sing Markov chain $Q^*_n=(Q^*_n(k))_{k\in\naturalso}$ on integers, which starts at $n$
and moves from $n$ to $m$ with transition probabilities
$$q^*(n,m)={n \choose m}\me (1-W)^{n-m}W^m,\;\;m=0,\ldots,n.$$
In the same direction, parts of the
composition $C_n$ are the magnitudes of jumps of a Markov chain
$Q_n=(Q_n(k))_{k\in\naturalso}$  with
transition probabilities
$$q(n,m)={n \choose m}\frac{\me (1-W)^{n-m}W^m}{1-\me W^n},\;\;m=0,\ldots,n-1.$$
This Markovian realisation implies
the following distributional recursions (see \cite[Section 3]{GINR}):
$$M_0=0,\;\;M_n=_d M_{Q^\ast_n(1)}+1, \ \ n\in\naturals,$$
\begin{equation}\label{kn}
K_0=0,\;\;K_n=_d K_{Q_n(1)}+1, \ \ n\in\naturals,
\end{equation}
\begin{equation}\label{empty}
L_0=0,\;\;L_n=_d
L_{Q^\ast_n(1)}+1_{\{Q^\ast_n(1)=n\}},n\in\naturals,
\end{equation}
where in the right-hand side $Q^\ast_n(1)$ is assumed independent of $\{M_n:n\in\naturals\}$ and
$\{L_n:n\in\naturals\}$, and $Q_n(1)$  independent of
$\{K_n:n\in\naturals\}$.
Analysis of the recursions by known direct methods is difficult, as these impose
restrictive conditions on the moments of $Q_n(1)$ or $Q_n^*(1)$.
Nevertheless, coupling with the multiplicative random walk allows to gain a lot of information about the compositions.
For instance,
let $g(n,m)$ be the potential function, equal to the probability that $Q_n$ ever visits  state
$m$,

$$g(n,m) =
%\mmp\{Q^*_n(0)=m\}+\sum_{j=1}^{\infty}\mmp\{Q^*_n(j)=m,Q^*_n(j-1)\neq m\}       =
\sum_{j=0}^{\infty}\mmp\{Q_n(j)=m\}.$$
The coupling implies that (\cite[Proposition 5]{Gne})
\begin{equation}\label{g_conv}
\lim_{n\to\infty}g(n,m)=\frac{1-\me W^m}{\mu m},
\end{equation}
which is  $0$ if $\mu=\infty$.

The coupling readily implies stochastic subadditivity $M_{n+m}<_d M_n+M'_m$ where the terms in the right-hand side are
independent. Indeed, note first that $M_n$ is nondecreasing. Now, when $n$ balls have been allocated within the range
$M_n$, adding $m$ new balls leads to (stochastically) maximal increase of the occupancy range when all $m$ fall outside
the old range $M_n$, in which event the new range of occupancy is distributed like $M_n+M'_m$.
With analogous notation, $L_{n+m}<_d L_n+L'_m$ for exactly the same reason (although $L_n$ is not monotone).

\section{Asymptotics of $M_n$}

Passing from the multiplicative to conventional (additive) random walk we introduce
\begin{equation}\label{add_rand_walk_def}
S_0:=0, \ \ S_k:=|\log W_1|+\ldots+|\log W_k|, \ \
k\in\mathbb{N}.
\end{equation}
In this scenario the Bernoulli sieve can be defined as allocation of balls with exponentially distributed marks
$E_j=-\log U_j, ~1\leq j\leq n,$ in boxes $(S_k, S_{k+1}),~k\in\naturals_0$.
Define
\begin{equation}\label{N_def}
N_t:=\inf\{k\geq 1 : S_k> t\}, \ \ t\geq 0,
\end{equation}
which is the first  time  $(S_k)$ enters $(t,\infty)$.
From the extreme-value theory we know that the maximum statistic
$T_n:=\max (E_1,\ldots, E_n))$
satisfies
$T_n-\log n \to_{d} T,$
where $T$ has the standard Gumbel distribution
$\mathbb{P}\{T\leq x\}= \exp(-e^{-x})$, $x\in\mathbb{R}$.
A key observation is that
$$M_n=N_{T_n},$$
thus the asymptotic  behaviour of $M_n$ is very much the same as that
of $N_{\log n}$, and the latter can be concluded by means of  the renewal theory.
A complete description of possible limit laws and scaling/centering
constants for the number of renewals $N_t$
 \cite[Proposition A.1]{GINR} leads to the following classification of possible limit laws for $M_n$.

\begin{thm}
\label{main2}{\rm \cite{GINR}}
The following assertions are equivalent:
\begin{itemize}
\item[\rm (i)] There exist sequences $\{a_n, b_n : n\in\naturals\}$ with $a_n > 0$ and $b_n\in\reals$ such
that, as $n\to\infty$, the variable $(M_n-b_n)/a_n$ converges weakly to some non-degenerate and proper distribution.
\item[\rm (ii)] The distribution of $|\log W|$ either belongs to the domain of attraction of a stable law,
 or the function $\mmp\{|\log W|>x\}$ slowly varies
at $\infty$.
\end{itemize}
Accordingly, there are five possible modes of convergence:
\begin{enumerate}
\item[\rm (a)]
If $\sigma^2<\infty$ then, with constants $b_n=\mu^{-1}\log n$ and $a_n=(\mu^{-3}\sigma^2\log n)^{1/2}$, the limiting
distribution of $(M_n-b_n)/a_n$ is standard normal.
\item[\rm (b)]
If $\sigma^2=\infty$, and
$$\int_0^x y^2\, \mmp\{|\log W|\in {\rm d}y\} \ \sim \ L(x) \ \ x\to \infty,$$ for some function $L$ slowly varying at $\infty$,
then, with $b_n=\mu^{-1}\log n$ and $a_n=\mu^{-3/2}c_{[\log
n]}$, where $c(x)$ is any positive function satisfying
$\lim_{x\to\infty}\,xL(c(x))/c^2(x)=1$, the limiting distribution of  $(M_n-b_n)/a_n$ is standard
normal.
\item[\rm (c)]
If
\begin{equation}\label{domain1}
\mmp\{|\log W|>x \} \ \sim \ x^{-\alpha}L(x), \ \ x\to \infty,
\end{equation}
for some $L$ slowly varying at $\infty$ and $\alpha\in (1,2)$
then, with $b_n=\mu^{-1}\log n$ and
$a_n=\mu^{-(\alpha+1)/\alpha}c_{\log n}$, where $c(x)$ is any
positive function satisfying $\lim_{x\to\infty}\,xL(c(x))/c^{\alpha}(x)=1$, the
limiting distribution of  $(M_n-b_n)/a_n$ is $\alpha$-stable with characteristic
function
$$t\mapsto \exp\{-|t|^\alpha
\Gamma(1-\alpha)(\cos(\pi\alpha/2)+i\sin(\pi\alpha/2)\, {\rm
sgn}(t))\}, \ t\in\reals.$$

\item[{\rm (d)}] Assume that the
relation {\rm (\ref{domain1})} holds with $\alpha=1$. Let $r:\reals
\to\reals$ be any nondecreasing function such that
$\lim_{x\to\infty} x\mmp\{|\log W|>r(x)\}=1$ and set
$$m(x):=\int_0^x \mmp\{|\log W|>y\}{\rm d}y, \ \ x>0.$$ Then, with
$b_n=\log n / (m(\log n/ r (m(\log n))))$ and $$a_n:={r(\log
n/m(\log n))\over m(\log n)},$$ the limiting distribution of
$(M_n-b_n)/a_n$ is $1$-stable with characteristic function
$$
t\mapsto \exp\{-|t|(\pi/2-i\log|t|\,{\rm sgn}(t))\}, \ t\in\reals.
$$

\item[{\rm (e)}] If the relation {\rm (\ref{domain1})} holds for $\alpha \in [0,1)$
then, with $b_n\equiv 0$ and $a_n:=\log^\alpha n/L(\log n)$, the
limiting distribution of  $M_n/a_n$ is the Mittag-Leffler law $\theta_\alpha$ with moments

$$\int_0^\infty x^k \ \theta_\alpha({\rm d}x)\ = \ {k!\over
\Gamma^k(1-\alpha)\Gamma(1+\alpha k)}, \ \ k\in\naturals.$$
\end{enumerate}
\end{thm}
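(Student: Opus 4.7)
The approach is to exploit the identity $M_n = N_{T_n}$ emphasised in the excerpt, where $T_n = \max(E_1,\dots,E_n)$ and classical extreme value theory gives $T_n = \log n + G_n$ with $G_n$ converging in distribution to a Gumbel variable, hence tight. This decouples the problem into two tasks: (I) identify the limiting behaviour of the deterministic first-passage time $N_t$ as $t\to\infty$ under each tail regime for $|\log W|$, and (II) show that substituting the random argument $T_n$ for $\log n$ does not alter the limit law after normalisation by $a_n$.

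Task (I) is a classical renewal-theoretic question about the random walk $(S_k)$ whose i.i.d.\ steps are $|\log W_k|$. The five cases (a)--(e) correspond bijectively to the possible domains of attraction for $(S_k)$ together with the pathological slowly varying tail case. Concretely: finite variance yields a Gaussian limit via Anscombe's theorem (a); infinite variance with slowly varying truncated second moment still yields a normal limit, with a nonstandard scaling (b); regularly varying tail of index $-\alpha$ with $\alpha\in(1,2)$ produces an $\alpha$-stable limit (c); the boundary $\alpha=1$ calls for a logarithmic correction in the centering (d); and $\alpha\in[0,1)$ forces $\mu=\infty$ and a Mittag-Leffler limit (e). Proposition A.1 of \cite{GINR} packages exactly this classification and supplies the constants $a_n, b_n$ required for $N_{\log n}$ under the stated hypothesis.

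For task (II), monotonicity of $t\mapsto N_t$ together with tightness of $G_n$ gives the bracketing $N_{\log n-C}\le N_{T_n}\le N_{\log n+C}$ with probability as close to one as desired by choosing $C$ large. In each regime $a_n$ and the increment $b_{\log n+C}-b_{\log n}$ are regularly varying in $\log n$, so by the uniform convergence theorem for regularly varying functions $(b_{\log n+C}-b_{\log n})/a_n\to 0$ and $a_{\log n+C}/a_n\to 1$; the limits of the two outer terms coincide with that of $N_{\log n}$, and a Slutsky argument transfers the limit to $M_n$. The equivalence (i)$\Leftrightarrow$(ii) then follows because the five regimes exhaust the tails for which $N_t$ admits a non-degenerate affine limit (a Feller-type characterisation), and the same sandwich lets one pass backwards from a limit of $M_n$ to one of $N_{\log n}$.

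I expect the main obstacle to be case (d). There the centering $b_n=\log n/m(\log n/r(m(\log n)))$ is an iterated composition of slowly varying functions, and verifying that the Gumbel shift $G_n=O_{\mmp}(1)$ produces a perturbation of $b_n$ of order $o(a_n)$ demands Potter-type bounds together with the specific growth rate of $a_n$ in this regime. Case (b) raises an analogous but milder difficulty; in the remaining cases the normalisation is essentially polynomial in $\log n$ (up to slowly varying factors) and the argument is routine.
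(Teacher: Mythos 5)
Your proposal takes essentially the same route as the paper: the identity $M_n=N_{T_n}$ with $T_n-\log n$ converging to a Gumbel limit, combined with the classification of limit laws and normalising constants for $N_t$ from Proposition A.1 of \cite{GINR}, which is exactly how the paper (a survey deferring details to \cite{GINR}) presents the result. The bracketing/Slutsky step you add to transfer the limit from $N_{\log n}$ to $N_{T_n}$, including the extra care with the slowly varying centering in case (d), is the standard way to make explicit what the paper leaves implicit, so the approach is correct and matches the paper's.
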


\section{Asymptotics of $K_n$}

Loosely speaking, $\nu$ controls the mean number of empty boxes,
so that
$\nu<\infty$ implies   $\lim_{n\to\infty}\me L_n={\nu/\mu}<\infty$ (Theorem \ref{L_n_mean} to follow).
Thus when $\nu<\infty$ the identity $K_n=M_n-L_n$ suggests that $K_n$ does not differ much from $M_n$.
A first result of this kind was obtained in \cite{Gne}: assuming  $\nu<\infty$ and
$\sigma^2<\infty$ it was shown that
$$(K_n-\mu^{-1}\log n)/\sqrt{\sigma^2\mu^{-3}\log n}\to_d
{\rm normal}\,(0,1), \ \ n\to\infty.$$
The proof was based on a careful analysis of the recursion {\rm (\ref{kn})} to conclude on the asymptotics of ${\rm Var}\,K_n$
and to eventually prove the normal limit.

The similarity between $M_n$ and $K_n$ was justified in full generality in \cite{GINR}, where is was shown that
under the assumption $\nu<\infty$
Theorem \ref{main2} remains valid if
$M_n$ is replaced by $K_n$.

Another approach which allows one to treat the cases of finite and
infinite $\nu$ in a unified way was proposed in \cite{GIM}.
It was suggested to approximate $K_n$ by
$N^\ast(\log n)$, where
\begin{eqnarray*}\label{nx}
N^\ast(x)&:=&\#\{k\in\naturals: p_k\geq
e^{-x}\}\\&=&\#\{k\in\naturals: W_1\cdots W_{k-1}(1-W_k)\geq
e^{-x}\},~~~  x>0.
\end{eqnarray*}
The connection exemplifies the general idea that
the variability of  $K_n$ stems from randomness in frequencies $(p_k)$ superposed with randomness in sampling,
and the first often plays a dominating role through the conditional law of large numbers
$K_n\sim{\mathbb E}(K_n\,|\,(p_k))$ a.s. (see \cite{Karlin}).
%We emphasise that the connection between $K_n$ and $N^\ast(x)$
%remains veiled unless
%we consider the Bernoulli sieve as the occupancy scheme with random frequencies, and
%the  variable $K_n$ is analysed conditionally on the frequencies $(p_k)$, and approximated by
Thus we believe that the approach based on $N^\ast(x)$
offers a natural and the most adequate way to
study the asymptotics of $K_n$. The following result was proved in
\cite{GIM}.
% (Theorem 1.1).

%So it was quite unexpected that if $\nu=\infty$ then in some
%situations weak asymptotic behavior of $K_n$ and $M_n$ may be
%different, i.e. $L_n$ may affect the asymptotic of $K_n$.This case
%was treated in full generality using the completely different
%method in \cite{GIM}.
%
% The main idea used was to connect the
%convergence of $K_{\Pi(t)}$ with the convergence of $N^*(\log x)$
%where $\Pi(t)$ is a unit Poisson process and $N^*(x)$ is the
%renewal process associated with perturbed random walk
%$\{T_k:k\in\naturalso\}$, i.e.
%$$
%T_k:=S_{k-1}-\log (1-W_k),\;k\in\naturals
%$$
%and
%$$
%N^*(x):=\#\{k\in\naturals : T_k\leq x\},\;x>0.
%$$
%
%To this end the renewal theory for perturbed random walk was
%developed in \cite{GIM} and without any assumptions (except the
%assertion that $|\log W|$ is non-arithmetic) the whole range of
%possible limiting laws (with implicitly given centering and
%normalising constants) was obtained. In comparison with Theorem
%\ref{K^*_conv} the only difference appears in an additional
%centering which is indispensable. This yields (recall that $N(x)$
%is a standard renewal process for the random walk with step
%distributed like $|\log W|$) the following
\begin{thm}\label{main}
If there exist functions $f: \mr_+\to\mr_+$ and $g:\mr_+\to\mr$
such that $(N_t-g(t))/f(t)$ converges weakly (as
$t\to\infty$) to some non-degenerate and proper distribution, then
also $(K_n-b_n)/a_n$ converge weakly (as  $n\to\infty$) to the
same distribution, where the constants are given by
$$b_n=\int_0^{\log n} g(\log
n-y)\,\mmp\{|\log (1-W)|\in {\rm d}y\}, ~~~a_n=f(\log n).$$
\end{thm}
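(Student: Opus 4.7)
The plan is to introduce the intermediate quantity $N^*(\log n)$ and split
\begin{equation*}
K_n - b_n \;=\; \bigl(K_n - N^*(\log n)\bigr) \;+\; \bigl(N^*(\log n) - b_n\bigr).
\end{equation*}
The two goals are to show that the first summand is $o_P(f(\log n))$ and that the second, divided by $f(\log n)$, has the same weak limit as $(N_t - g(t))/f(t)$. Note that non-degeneracy of the limit forces $f(\log n)\to\infty$, so this will imply the theorem.

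For the first step I would condition on the frequencies $(p_k)$. Then box $k$ is occupied independently with probability $1-(1-p_k)^n$, while the summand of $N^*(\log n)$ is $1_{\{p_k\ge 1/n\}}$. The function $p\mapsto 1-(1-p)^n$ is essentially a smoothed Heaviside with sharp transition at $p\sim 1/n$; in the logarithmic variable $\log(1/p_k)=S_{k-1}+\eta_k$, with $\eta_k:=|\log(1-W_k)|$, this transition has width of order $\log\log n$ centered at $\log n$. The conditional $L^1$-error $\me[|K_n - N^*(\log n)|\mid (p_k)]$ is therefore controlled, up to negligible terms from the tails, by the number of indices $k$ with $S_{k-1}+\eta_k\in[\log n-T,\log n+T]$ for $T=O(\log\log n)$. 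Since $(\eta_k)$ are i.i.d.\ and independent of $S_{k-1}$, a Blackwell-type estimate for the perturbed renewal process $(S_{k-1}+\eta_k)_k$ bounds this count by $O_P(T)=o_P(f(\log n))$.

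For the second step, write $N^*(x)=\sum_{k\ge 1}1_{\{S_{k-1}+\eta_k\le x\}}$. Using independence of $\eta_k$ and $S_{k-1}$ and conditioning on $\eta_k=y$, the renewal count $\sum_k 1_{\{S_{k-1}\le x-y\}}$ is essentially $N_{x-y}$, which motivates the centering $b_n=\int_0^{\log n} g(\log n-y)\,dF(y)$ with $F$ the law of $\eta$. To upgrade to weak convergence, truncate the perturbations at a slowly growing level $T=T(n)\to\infty$: for $\eta_k\le T$ the hypothesis $(N_{x-y}-g(x-y))/f(x-y)\dod Z_\infty$, combined with the regularity of $f$ at infinity guaranteed by the renewal-theoretic classification in Theorem \ref{main2} (which gives $f(x-y)\sim f(x)$ uniformly in $y\in[0,T]$), delivers a single common limit $Z_\infty$ for all the surviving terms. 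The tail contribution from $\eta_k>T$ is dispatched by a second-moment estimate showing that it concentrates on its mean $\int_T^{\log n} g(\log n-y)\,dF(y)$ with fluctuations of order $o_P(f(\log n))$, so that it can be absorbed into $b_n$ without perturbing the limit distribution.

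The main obstacle is the case $\nu=\me\eta=\infty$, when the perturbations $\eta_k$ have heavy tails. Here the truncation argument is delicate: one must exploit the precise tail behaviour of $F$, inherited from the law of $W$ near $1$, together with the specific functional form of $g$ arising in each of the five regimes of Theorem \ref{main2}, to verify simultaneously that (i) the truncated centering $\int_0^T g(\log n-y)\,dF(y)$ is close enough to $b_n$, and (ii) the tail sum $\sum_k 1_{\{S_{k-1}+\eta_k\le\log n,\,\eta_k>T\}}$ does not introduce new non-trivial fluctuations beyond those carried by $Z_\infty$.
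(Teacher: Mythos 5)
Your plan is the one the paper indicates (and which the cited reference carries out): approximate $K_n$ by $N^*(\log n)$, i.e.\ by the number of points of the perturbed random walk $S_{k-1}+\eta_k$, $\eta_k=|\log(1-W_k)|$, lying in $[0,\log n]$, and then transfer the renewal-type limit theorem for $N_t$ to $N^*$. The first half of your argument (sandwiching the discrepancy $|K_n-N^*(\log n)|$ by the number of perturbed points in a window of width $O(\log\log n)$ around $\log n$, plus tail terms) is the right idea. However, two of the specific claims you rely on are unsound, and the case that makes the theorem nontrivial is deferred rather than proved. First, the chain ``window count $=O_P(T)$ by a Blackwell-type estimate, hence $o_P(f(\log n))$'' fails in general: in regime (e) of Theorem \ref{main2} with $\alpha=0$ one has $a_n=1/L(\log n)$, which may grow more slowly than $\log\log n$, so $O_P(\log\log n)$ is \emph{not} $o_P(f(\log n))$; there the smallness of the window count must instead come from $\mu=\infty$ and the slow variation of the renewal function $U$ (the increment of $U$ over the window is $o(U(\log n))\asymp o(f(\log n))$), which is a different argument from the one you invoke.

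Second, and more seriously, the heart of the matter is the tail contribution when $\nu=\infty$, and your treatment of it is both incomplete and based on a tool that cannot work. Conditionally on the walk, $\me[N^*(x)\mid (S_k)]=\sum_k \mmp\{\eta\le x-S_{k-1}\}=\int_0^x N_{x-y}\,\mmp\{\eta\in {\rm d}y\}$; this identity is what makes your ``single common limit $Z_\infty$ for the surviving terms'' precise (all terms are driven by the same path, and one must control $N_x-N_{x-y}$ and $g(x)-g(x-y)$ uniformly for $y\le T$), but you assert it rather than argue it. For the range $y>T$ you propose ``a second-moment estimate showing that it concentrates on its mean with fluctuations $o_P(f(\log n))$.'' The conditional (given the walk) fluctuations indeed admit a variance bound, but the fluctuations of the conditional mean $\int_{(T,x]}N_{x-y}\,\mmp\{\eta\in {\rm d}y\}$ are walk-driven, and in regimes (c)--(e) these inherit stable-type tails: the normalized quantities $(N_s-g(s))/f(s)$ converge to laws with infinite variance, so no variance estimate of the right order is available. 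The standard repair is a first-moment bound, namely uniform integrability/moment convergence in the underlying renewal theorems giving $\me|N_s-g(s)|\le C f(s)$ for all large $s$, whence the tail is at most $C f(\log n)\,\mmp\{\eta>T\}=o(f(\log n))$. Your closing paragraph explicitly leaves the verification of points (i) and (ii) in the case $\nu=\infty$ to a case-by-case analysis that is not carried out; since this is exactly the case the theorem was designed to cover (when $\nu<\infty$ one already knows $K_n$ and $M_n$ are interchangeable), the proposal as it stands is a correct outline with the decisive estimates missing.
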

As in
\cite{GINR},
the convergence criterion for $N_t$ leads to a
complete characterisation of possible normalisations and limiting laws  for $K_n$, see Corollary 1.1 in \cite{GIM}.
%Cases (a)-(e) of Theorem \ref{main2} remain valid after replacing
%$M_n$ by $K_n$ and centering constants $b_n$ by $d_n$, where
%$$d_n:=\int_0^n \Big(b(n)-b(n-y)\Big)\mmp\{|\log(1-W)|\in {\rm
%d}y\},$$ and $b(x)$ is any continuous function such that
%$b(n)=b_n$.
%\end{thm}
But  Theorem \ref{main} says more: if $\nu=\infty$ the
behaviour of $L_n$ may affect the asymptotics of $K_n=M_n-L_n$. The
following example illustrates the phenomenon.
\begin{example}{\rm
Assume that, for some $\gamma\in (0,1/2)$,
$$\mmp\{W>x\}={1 \over 1+|\log(1-x)|^\gamma}, \ \  x\in [0,1).$$ Then

$$\me \log^2 W<\infty \ \ {\rm and} \ \ \mmp\{|\log(1-W)|>x\}\sim
x^{-\gamma}~~~{\rm as}~x\to\infty,$$ and in this case, $$a_n= {\rm
const}\log^{1/2}n \ \ {\rm and} \ \ d_n=\mu^{-1}(\log
n-(1-\gamma)^{-1}\log^{1-\gamma} n+o(\log^{1-\gamma}n)).$$ Thus we
see that the second term $d_n-\mu^{-1}\log n$ of centering cannot
be ignored. Moreover, one can check that
$$\me L_n\sim {1\over \mu} \sum_{k=1}^n{\me W^k\over k}\
\sim \ b_n-\mu^{-1}\log n \ \sim \ {1\over \mu (1-\gamma)}
\log^{1-\gamma}n,$$ which reveals the indispensable contribution of
$L_n$. }
\end{example}

\section{Weak convergence of $K_{n,r}$}
Assume  $\mu<\infty$.
For $B:=\{\prod_{i=1}^k W_i : k\in\naturalso\}$  the set of sites visited by the multiplicative random walk,
consider  a point process
with unit atoms located  at points of  $-\log B$
(which are the sites visited by $S_k,~k\in\naturals_0$).
By the renewal theorem the point process $-\log B-\log n$ vaguely converges
to a shift-invariant renewal process $\mathcal{P}$ on the whole line. Therefore, the point process
$nB$ converges vaguely to a point process $\mathcal{B}:=\exp(-\mathcal{P})$ on ${\mathbb R}_+$.
Think of intervals between consequitive points of $\mathcal{B}$ as a series of boxes.
Note that the process is self-similar, meaning that $c\mathcal{B}=_d \mathcal{B}$ for every $c>0$,
and has the intensity
measure $(\mu x)^{-1}{\rm d}x$, so the atoms accumulate at $0$ and $\infty$.
In the role of balls assume the points of
a  unit Poisson process $\mathcal{U}$ independent of $\mathcal{B}$.
A well-known fact of extreme value theory is that $\mathcal{U}$ is the
vague limit of the point process with unit atoms located at $nU_j,~1\leq j\leq n$.
The location of the leftmost atom of $\mathcal{U}$, say $Y$, has exponential distribution.
For $r\geq 0$ define $\hat{K}_r$
to be the number of component intervals of $(Y,\infty)\setminus\mathcal{B}$
 that contain exactly $r$ atoms of  $\mathcal{U}$. The existence of weak limits for the
occupancy counts is read off from the convergence of point processes:
\begin{thm}{\rm \cite{GIR}}
As $n\to\infty$ we have the joint convergence in distribution
$$(L_n,K_{n,1},K_{n,2},\ldots)\to_d (\hat{K}_0,\hat{K}_1,\hat{K}_2,\ldots)
$$
along with
$$
\me K_{n,r}\to \me \hat{K}_r=\frac{1}{r\mu},\;r>0.
$$
\end{thm}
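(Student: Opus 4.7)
The plan is to lift the claim to the level of point processes, apply the continuous mapping theorem for the joint weak convergence, and evaluate the limit mean by a renewal-theoretic Campbell calculation.

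The first ingredient is the joint vague convergence on $(0,\infty)$,
$$
\Bigl(\sum_{k\ge 0}\delta_{nQ_k},\ \sum_{j=1}^n\delta_{nU_j}\Bigr)\ \dod\ (\mathcal B,\mathcal U),
$$
with independent limits. The first coordinate is the renewal picture sketched in the paragraph preceding the statement: in the additive scale the atoms become $\{S_k-\log n\}$ whose left end drifts to $-\infty$, and the key renewal theorem produces a two-sided stationary renewal process of intensity $1/\mu$; exponentiating yields $\mathcal B$ with intensity $(\mu x)^{-1}dx$. The second coordinate is the classical fact that the rescaled uniform sample converges vaguely to a unit Poisson process on $(0,\infty)$. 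Independence of inputs at each $n$ carries over to independence of the limits.

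Next comes the continuous mapping argument: on the event (of full probability) that the atoms of the limit pair are pairwise distinct and $Y$ is not an atom of $\mathcal B$, the vector $(\hat K_0,\hat K_1,\dots)$ depends continuously on the restriction of the pair to $(0,R)$ for any fixed $R$, in the vague topology. Tightness at $+\infty$ follows from showing that the expected number of components of $(Y,\infty)\setminus\mathcal B$ lying above $R$ with exactly $r$ atoms of $\mathcal U$ vanishes as $R\to\infty$ for every $r$, with a matching control for the pre-limit counts; the atoms of $\mathcal B$ near $0$ are harmless because counting is restricted to $(Y,\infty)$ with $Y>0$ a.s. The leftmost component $(Y,b_1)$ is identified with the pre-limit box containing $\min U_j$ by treating the ball at $Y$ as lying inside it.

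For the mean, write $p_k=e^{-S^{\ast}_k}$ with $S^{\ast}_k:=S_{k-1}+|\log(1-W_k)|$. Poissonising the ball count turns $\me K_{n,r}$ into
$$
\frac{1}{r!}\me\sum_{k\ge 1} e^{-r(S^{\ast}_k-\log n)}\exp\bigl(-e^{-(S^{\ast}_k-\log n)}\bigr).
$$
Because $\{S_{k-1}-\log n\}_{k\ge 1}$ is a renewal sequence whose left end drifts past every compact window and $|\log(1-W_k)|$ is an iid kick independent of $S_{k-1}$, the shifted configuration $\{S^{\ast}_k-\log n\}_{k\ge 1}$ converges to a stationary point process on $\mr$ of intensity $1/\mu$; Campbell's formula then gives
$$
\lin\me K_{n,r}=\frac{1}{r!\mu}\int_{-\infty}^{\infty}e^{-ru}e^{-e^{-u}}\,du=\frac{\Gamma(r)}{r!\mu}=\frac{1}{r\mu}
$$
after a standard de-Poissonisation, identifying $\me\hat K_r$ and delivering the convergence of means. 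The main obstacle lies in the continuity step: choosing a topology in which the infinite-dimensional output is a.s.\ continuous at the limit pair, and pinning down the boundary convention at $(Y,b_1)$ so that the ball at $Y$ plays the role of $\min U_j$ in the pre-limit; everything else then reduces to renewal theory, extreme-value convergence, and Campbell's formula.
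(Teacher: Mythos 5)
Your proposal follows essentially the same route the paper indicates (and that the cited source \cite{GIR} carries out): joint vague convergence of the scaled frequency and sampling point processes to the independent pair $(\mathcal B,\mathcal U)$, a continuous-mapping/truncation argument for the joint law, and a renewal-theoretic (Campbell/key renewal theorem) evaluation giving $\me\hat K_r=1/(r\mu)$. Your explicit boundary convention -- counting the ball at $Y$ as lying in the leftmost component $(Y,b_1)$ -- is indeed the correct reading of the paper's definition of $\hat K_r$, and the remaining steps you flag (a.s.\ continuity of the counting functional and the uniform control of components far to the right) are exactly the technical points the full proof must supply.
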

When $W=_d{\rm beta}(\theta,1)$ the process $\cal B$ is Poisson with intensity $\theta x^{-1}{\rm d}x$.
By self-similarity, the partition induced by allocation of $n$ leftmost atoms of $\cal U$ is the
Ewens partition.
 The theorem allows to re-prove the results on asymptotics of the Ewens partition mentioned in Introduction,
along with  Theorem \ref{emptybox}.
Except the ${\rm beta}(\theta,1)$ case no explicit formulas for the distribution of occupancy counts are known;
in general the $\hat{K}_r$'s are neither independent, nor Poisson.
See more on self-similar partitions in \cite[Section 5]{RegCombStr}.
% eta ssylka A. Gnedin, Regeneration in random combinatorial structures,
% Probability Surveys, Vol. 7 (2010) 105-156

\section{Asymptotics of $Z_n$}

The variable $Z_n$ is analogous to the number of winners in the leader
election algorithm \cite{BSW,BrussGrubel,BrussOC,KirschProd}.

\begin{thm}{\rm \cite{GINR}} The number of balls in the last occupied box satisfies:
\begin{itemize}
\item[{\rm (1)}]
If $\mu<\infty$ then $Z_n\to_d Z$, $n\to\infty$, where the variable $Z$ has
distribution
$$
\mmp\{Z=k\}=\frac{\me (1-W)^k}{\mu k},\;\;k\in\naturals.
$$
\item[{\rm (2)}]
If {\rm (\ref{domain1})} holds with  $\alpha\in [0,1)$ then
$$
\frac{\log Z_n}{\log n}\to_d Z^{(\alpha)}, \ \ n\to\infty,
$$
where the law of $Z^{(0)}$ is $\delta_1$, while for $\alpha\in
(0,1)$ we have $Z^{(\alpha)}=_d{\rm beta}(1-\alpha,\alpha)$.
\item[{\rm (3)}]
If {\rm (\ref{domain1})} holds with $\alpha=1$ and $\mu=\infty$, then
$$
\frac{m(\log Z_n)}{m(\log n)}\to_d Z^{(1)}, \ \ n\to\infty,
$$
where $m(x)=\int_0^x \mmp\{|\log W|>y\}{\rm d}y\,$, and $Z^{(1)}=_d{\rm uniform}[0,1]$.
\end{itemize}
\end{thm}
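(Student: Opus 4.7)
The plan is to work in the additive picture of Section 3: set $E_j := -\log U_j$ (standard exponentials, independent of the $W_k$'s), $T_n := \max_{j \leq n} E_j$, and $S_k := \sum_{i=1}^{k}|\log W_i|$, so that ball $j$ lies in box $k$ iff $E_j \in (S_{k-1}, S_k)$, the last occupied box is $(S_{M_n - 1}, S_{M_n})$ with $M_n = N_{T_n}$, and it contains the maximum $T_n$. Writing $U_{T_n} := T_n - S_{M_n - 1}$ for the renewal undershoot at level $T_n$ and using that no $E_j$ exceeds $T_n$, one has $Z_n = 1 + \#\{j : S_{M_n-1} < E_j < T_n\}$. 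Since $(S_k) \perp (E_j)$ and conditionally on $T_n = t$ the remaining $n-1$ exponentials are i.i.d.\ $\mathrm{Exp}(1)$ truncated to $(0, t]$,
\[
Z_n - 1 \mid T_n,\,U_{T_n} \ \sim \ \mathrm{Bin}\!\left(n-1,\ \frac{e^{-T_n}(e^{U_{T_n}}-1)}{1 - e^{-T_n}}\right).
\]
This single formula is the hinge for all three parts.

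For part (1) with $\mu < \infty$, extreme-value theory gives $n e^{-T_n} \to_d \xi \sim \mathrm{Exp}(1)$, and the key renewal theorem (applicable since $|\log W|$ is non-lattice) gives $U_{T_n} \to_d U_\infty$ with density $\mu^{-1}\mmp\{|\log W| > u\}$ on $(0,\infty)$; the two limits are independent because $(S_k) \perp (E_j)$. The random binomial parameter then satisfies $(n-1) p_n \to_d \xi(e^{U_\infty}-1)$, and the usual Poisson approximation of binomials with a randomly parametrised rate yields $Z_n \to_d 1 + \mathrm{Poisson}(\xi(e^{U_\infty}-1))$, a mixed Poisson. Integrating out the exponential $\xi$ against the Poisson law shows that the limit, conditional on $U_\infty$, is geometric on $\naturals$ with success probability $e^{-U_\infty}$. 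Averaging over $U_\infty$ reduces the distribution to $\mu^{-1}\int_0^\infty e^{-u}(1-e^{-u})^{k-1}\mmp\{|\log W|>u\}\,\mathrm{d}u$; the substitution $w = e^{-u}$ followed by one integration by parts identifies this with $\me(1-W)^k/(k\mu)$, as required.

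For parts (2) and (3), $\mu$ may be infinite but the binomial representation still holds. I invoke the Dynkin--Lamperti arcsine theorem for the undershoot of a renewal process whose step has regularly varying tail (\ref{domain1}): for $\alpha \in (0,1)$, $U_{T_n}/T_n \to_d \mathrm{Beta}(1-\alpha,\alpha)$; for $\alpha = 0$, $U_{T_n}/T_n \to 1$ in probability; and for $\alpha = 1$ with $\mu = \infty$, $m(U_{T_n})/m(T_n) \to_d \mathrm{Uniform}[0,1]$. Since $T_n/\log n \to 1$, the same limits hold with $\log n$ in place of $T_n$. On the other hand $np_n$ is of order $e^{U_{T_n}}$, which tends to infinity in probability, so a conditional Chebyshev bound applied to the binomial gives $\log Z_n - U_{T_n} = o_P(U_{T_n})$ (in fact $O_P(1)$ on closer inspection). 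Substituting into each Dynkin--Lamperti regime, and using slow variation of $m$ in case (3) to absorb the residual logarithmic fluctuation, delivers the claimed limits $Z^{(\alpha)}$.

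The main technical obstacle is the joint passage to the limit when the binomial parameter $np_n$ is itself random and, in (2)--(3), tends to infinity at a random rate: one needs a conditional Poisson-approximation lemma (for (1)) or a conditional concentration estimate on the logarithmic scale (for (2),(3)), combined with the asymptotic independence of the extreme-value limit $\xi$ and the renewal undershoot limit. The remaining work is to track the precise form of the undershoot limit in each tail-index regime so that it matches the normalisations appearing in the statement.
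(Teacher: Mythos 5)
Your argument is correct, and for part (1) it takes a genuinely different route from the paper. The paper settles the case $\mu<\infty$ purely algebraically: by the exact identity (\ref{17}), $\mmp\{Z_n=m\}=g(n,m)\,\me(1-W)^m/(1-\me W^m)$, and the convergence (\ref{g_conv}) of the potential function, $g(n,m)\to(1-\me W^m)/(\mu m)$, the limit law $\mmp\{Z=m\}=\me(1-W)^m/(\mu m)$ drops out in one line with no mixing computation. You instead derive it from the joint limit $(ne^{-T_n},U_{T_n})\to_d(\xi,U_\infty)$ with $\xi$ exponential and $U_\infty$ the stationary undershoot with density $\mu^{-1}\mmp\{|\log W|>u\}$, a conditional Poisson approximation of the binomial count, and the geometric-mixture plus integration-by-parts reduction to $\me(1-W)^k/(\mu k)$; I checked this computation and it is right, at the price of two extra limit-transfer lemmas (Poissonization of a binomial with random parameter, and asymptotic independence of $\xi$ and the undershoot at the independent random level $T_n$) which the paper's route avoids. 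For parts (2) and (3) your route is essentially the paper's: both rest on the Dynkin--Lamperti/Erickson limits for the undershoot $U(z)=z-S_{N(z)-1}$ evaluated at $T_n=E_{n,n}$; the paper passes to $Z_n$ through the exact order-statistics representation $\mmp\{Z_n>k\}=\mmp\{U(E_{n,n})>E_{n,n}-E_{n-k,n}\}$, whereas you use conditional binomial concentration ($np_n$ of order $e^{U_{T_n}}\to_P\infty$, which is legitimate since $\mu=\infty$ throughout regime (2) by Karamata, and by assumption in regime (3)) to get $\log Z_n=U_{T_n}+O_P(1)$ and then normalize by $\log n$, respectively apply $m$, which is $1$-Lipschitz and diverges, in case (3). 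The items you flag as remaining work --- the conditional Poisson/Chebyshev lemmas and transferring the renewal limits to the random level $T_n$ --- are indeed the only gaps to fill, and they are standard; your sketch would compile into a complete proof.
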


The case $\mu<\infty$ is quite elementary, as is seen from

\begin{equation}\label{17}
\mmp\{Z_n=m\}=g(n,m)\mmp\{Q_m(1)=0\}=g(n,m)\frac{\me
(1-W)^m}{1-\me W^m}
\end{equation}
and {\rm (\ref{g_conv})}. In the case $\mu=\infty$ the result follows
from the known limit distribution of the undershoot
$U(z)=z-S_{N(z)-1}$ (see \cite{Dynkin, Erick}) and the
representation
$$
\mmp\{Z_n>k\}=\mmp\{U(E_{n,n})>E_{n,n}-E_{n-k,n}\},\;\;k\in\naturals,
$$
where $E_{1,n}\leq \ldots\leq  E_{n,n}=T_n$ are the order statistics of the
exponential variables  $E_j, ~1\leq j\leq n$.

\section{Asymptotics of $L_n$}
%\subsection{Asymptotics of $\me L_n$.}

Although there is an explicit formula
\begin{equation}\label{empty_meam_expl}
\me L_n=\sum_{k=1}^{n}(-1)^{k+1}{ n \choose k}\frac{1-\me
(1-W)^k}{1-\me W^k},
\end{equation}
it does not seem possible to employ it in order to conclude on the asymptotic behaviour of $\me L_n$
without restrictive additional assumptions.
%we do not know how to use it in order to find the asymptotics of
%$\me L_n$, as $n\to\infty$, without strong additional assumptions.
%However, it seems to be hard of  Assuming that there exists
%moments $\me W^{-\varepsilon}$ and $\me (1-W)^{-\varepsilon}$ for
%some $\varepsilon>0$, it was proved that as $n\to\infty$
%$$
%\lim_{n\to\infty} \me L_n=\frac{\nu}{\mu},
%$$
%using complex analysis technique.

Using a different approach we arrived at
\begin{thm}\label{L_n_mean}
The expectation $\mathbb{E}L_n$ exhibits the following
asymptotic behaviour:
\begin{enumerate}
\item[\rm (i)] If  $\mu=\infty$ and $\nu=\infty$ then
$$\liminf_{n\to\infty}{\me W^n\over \me (1-W)^n}\leq\liminf_{n\to\infty}{\mathbb{E}L_n}\leq \limsup_{n\to\infty}{\mathbb{E}L_n}\leq
\limsup_{n\to\infty}{\me W^n\over \me (1-W)^n}.$$
In particular,
$$\lim_{n\to\infty} {\me W^n\over \me (1-W)^n}=\gamma_0\in [0,\infty]$$
implies
$\lim_{n\to\infty} \me L_n=\gamma_0$.
\item[\rm (ii)] If $\nu<\infty$ and $\mu\leq
\infty$ then
$$\lim_{n\to\infty} \me L_n=\nu/\mu.$$
\item[\rm (iii)] If
$\mu<\infty$ and $\nu=\infty$ then, as $n\to\infty$,
$$\me L_n\sim {1\over \mu}\int_1^n {\me e^{-y(1-W)}\over y}{\rm d}y.$$
\end{enumerate}

\end{thm}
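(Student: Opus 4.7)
The plan is to represent $\mathbb{E}L_n$ as an integral against the renewal measure of the additive walk $(S_k)$ from (\ref{add_rand_walk_def}), and then appeal to renewal theory. Writing $L_n = \sum_{k\geq 1}\mathbf{1}\{\text{box }k\text{ empty and }k<M_n\}$ and conditioning on the positions of $(Q_k)$, each indicator has conditional probability $(1-p_k)^n - (1-e^{-S_{k-1}})^n$; using that $W_k$ is independent of $S_{k-1}$ and summing over $k \geq 1$ yields
\[
\mathbb{E}L_n \;=\; \int_0^\infty \phi_n(s)\,U(ds),\qquad \phi_n(s):=\mathbb{E}\bigl[(1-e^{-s}(1-W))^n - (1-e^{-s})^n\bigr],
\]
where $U(ds)=\sum_{k\geq 0}\mathbb{P}(S_k\in ds)$ is the renewal measure of $(S_k)$. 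A change of variable $u=e^{-s}$ gives the total mass $\int_0^\infty \phi_n(s)\,ds = \sum_{k=1}^n \mathbb{E}W^k/k$, which is asymptotically equivalent to $\int_1^n \mathbb{E}e^{-y(1-W)}\,dy/y$ by an elementary Abelian comparison.

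For cases (ii) and (iii) I would rescale by $s = \log n + t$: pointwise $\phi_n(\log n + t) \to \psi(t) := \mathbb{E}[e^{-e^{-t}(1-W)}] - e^{-e^{-t}}$, and Frullani's identity gives $\int_{\mathbb{R}} \psi(t)\,dt = \nu$. Blackwell's renewal theorem then controls the shifted measure: when $\mu<\infty$ and $\nu<\infty$, $U(\log n + dt)\to \mu^{-1}\,dt$ and dominated convergence delivers $\mathbb{E}L_n \to \nu/\mu$; when $\mu=\infty$ and $\nu<\infty$, the shifted renewal measure has vanishing mass on every bounded interval, hence $\mathbb{E}L_n \to 0 = \nu/\mu$; and when $\mu<\infty$ and $\nu=\infty$, the total mass $\int \phi_n\,ds$ diverges, and a quantitative key-renewal estimate gives $\mathbb{E}L_n \sim \mu^{-1}\int \phi_n\,ds$, which combined with the first paragraph yields the stated integral.

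For case (i) I would work directly from the distributional recursion (\ref{empty}). Taking expectations and solving for $\mathbb{E}L_n$ produces
\[
\mathbb{E}L_n \;=\; (1-a_n)\,c_n + a_n\,\rho_n,\qquad a_n := \frac{\mathbb{E}(1-W)^n}{1-\mathbb{E}W^n},\quad \rho_n := \frac{\mathbb{E}W^n}{\mathbb{E}(1-W)^n},
\]
where $c_n$ is the convex combination of $\mathbb{E}L_m$, $m=1,\ldots,n-1$, with weights proportional to $q^*(n,m)$. Since $a_n \in [0,1]$ and $\mathbb{E}L_n$ is a convex combination of $c_n$ and $\rho_n$, it always lies between them. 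Iterating this, using $a_n\to 0$ together with the concentration of $\mathrm{Bin}(n,W)$ on values of order $n$ (so that $\sum_{m\leq M}q^*(n,m)\to 0$ for every fixed $M$, washing out any bounded initial segment of $\mathbb{E}L_m$), delivers the bracketing $\liminf\rho_n \leq \liminf\mathbb{E}L_n \leq \limsup\mathbb{E}L_n \leq \limsup\rho_n$.

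The hardest step is justifying the limits in the renewal integrals of the second paragraph: one needs uniform envelopes for $\phi_n$ so that dominated convergence applies against the shifted measure $U(\log n + \cdot)$, and in case (iii) a quantitative key renewal theorem controlling the error when $\int \phi_n\,ds$ itself diverges. Case (i) carries its own delicacy, since the recursion drags $\mathbb{E}L_n$ toward $\rho_n$ only at rate $a_n \to 0$, so the argument must carefully rule out early spikes of $\mathbb{E}L_m$ persisting in the $\limsup$.
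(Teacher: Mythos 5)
Your part (i) argument has a genuine gap: it never uses $\mu=\infty$ (nor $\nu=\infty$), yet the conclusion fails without such a hypothesis. Take $W$ with density $2x$ on $(0,1)$: then $a_n=\frac{2}{n(n+1)}\to0$, $q^*(n,m)=\frac{2(m+1)}{(n+1)(n+2)}\to0$ for every fixed $m$ (so bounded initial segments are washed out and the weights concentrate on indices of order $n$) --- all the ingredients you invoke --- but $\rho_n=n+1\to\infty$ while $\me L_n\to\nu/\mu<\infty$ by part (ii), so the lower bracketing $\liminf\rho_n\le\liminf\me L_n$ is false. The underlying reason is that a one-step convex combination with weight $a_n\to0$ on $\rho_n$ and an average of past values otherwise cannot force the bracketing: if $a_n$ is summable, early values of $\me L_m$ can persist (compare the toy scheme $x_n=(1-a_n)\max_{m<n}x_m$ with $\sum a_n<\infty$). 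What is needed is the fully unrolled recursion: iterating (\ref{empty}) down to absorption gives $\me L_n=\sum_{m\ge1}g(n,m)\frac{\me W^m}{1-\me W^m}=\sum_{m\ge1}\mmp\{Z_n=m\}\,\frac{\me W^m}{\me(1-W)^m}$ by (\ref{17}), i.e.\ $\me L_n$ is a weighted average of the $\rho_m$ with weights $\mmp\{Z_n=m\}$ summing to one; the hypothesis $\mu=\infty$ then enters through (\ref{g_conv}), which forces $\mmp\{Z_n=m\}\to0$ for each fixed $m$, and the Toeplitz--Schur Lemma \ref{Toeplitz1} delivers the bracketing. This is exactly the paper's proof of (i); without this (or an equivalent quantitative control showing that the total weight carried by the $\rho$-terms tends to one and spreads out over $m$), your sketch does not close.

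For parts (ii) and (iii), your representation $\me L_n=\int_0^\infty\phi_n(s)\,U({\rm d}s)$ and the identity $\int_0^\infty\phi_n(s)\,{\rm d}s=\sum_{k=1}^n\me W^k/k$ are correct and are the fixed-$n$ analogue of the paper's poissonised formula (\ref{eq2}). But the analytic core is precisely what you defer: the key renewal theorem does not apply as stated to a family of integrands $\phi_n$ varying with $n$ (this is not a convolution $\int K(t-x)U({\rm d}x)$ with a fixed kernel), and in case (iii) the total mass diverges, so even for a fixed kernel a nonstandard statement is required; writing ``uniform envelopes'' and ``a quantitative key-renewal estimate gives $\me L_n\sim\mu^{-1}\int\phi_n$'' assumes the very thing to be proved. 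The paper circumvents both obstacles by poissonisation: $\me L_{\Pi_{e^t}}=\int_0^\infty K(t-x)U({\rm d}x)$ is an exact convolution with the fixed kernel $K(t)=\varphi(e^t)-\exp(-e^t)$, to which an extension of Sgibnev's key renewal theorem for kernels with $\int_{-\infty}^tK(z){\rm d}z\to\infty$ applies, and the return to fixed $n$ is a separate depoissonisation step (Lemma \ref{depoisson}), whose proof in turn needs the non-trivial Lemma \ref{cruc}. Your route (direct, without poissonisation) is plausible and would be a genuinely different argument, especially for (ii), which the paper only cites; but as written the envelope construction and the divergent-mass renewal estimate --- the substance of (iii) --- are missing.
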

\begin{proof}
Part (i). Set $s_m={\me W^m\over \me(1-W)^m}$. We will use the
representation
\begin{equation}\label{represent2}
\me L_n=\me s_{Z_n},
\end{equation}
which follows from {\rm (\ref{17})}. The array
$c_{n,m}:=\mmp\{Z_n=m\}$
  verifies  the conditions of Lemma
\ref{Toeplitz1} in Appendix, in particular by the  assumption $\mu=\infty$.
%Setting in this proposition $s_m=f(m)$
%{\me W^m\over \me (1-W)^m}$, $m\in\naturals$,
Hence the lemma can be applied to  $t_n=\me L_n$,
whence the assertion.
When $\gamma_0$ is well defined the
proof is  simpler, as
in this case the statement follows from
{\rm (\ref{represent2})}, divergence of $Z_n$, and by using
dominated convergence in the case $\gamma_0<\infty$, respectively using Fatou's lemma in the case $\gamma_0=\infty$.

See \cite{GINR} and \cite{GIR} for (ii).

For part (iii)  we use the poissonised version of the Bernoulli sieve, in
which balls are thrown  one-by-one at the epochs of a unit Poisson
process $(\Pi_t)_{t\geq 0}$, independent of $W_k$'s.  One can check that
%However, we start by treating a poissonized version of Karlin's
%occupancy scheme \cite{Karlin} (with probability $p_k$ of hitting
%the $k$th box) since as far as we know, formula {\rm (\ref{kar}) given
%below is new even in this setting.
%
%Since $$\me M_{\Pi_t}=\sum_{k=1}^\infty\mmp\{M^\ast_{\Pi_t}\geq
%k\}=\sum_{k=1}^\infty
%\left(1-e^{-t(1-p_1-\ldots-p_{k-1})}\right),$$ and as has already
%been observed by Karlin, $$\me
%K_{\Pi_t}=\sum_{k=1}^\infty\left(1-e^{-tp_k}\right),$$ we
%conclude that
%\begin{equation}\label{kar}
%\me
%L_{\Pi_t,0}=\sum_{k=1}^\infty\left(e^{-tp_k}-e^{-t(1-p_1-\ldots-p_{k-1})}\right).
%\end{equation}
%Given $(W_k)_{k\in\naturals}$, the poissonized version of the Bernoulli
%sieve is a particular case of poissonized Karlin's occupancy
%scheme with $$p_k=W_1\cdot \ldots \cdot W_{k-1}(1-W_k), \ \
%k\in\naturals.$$ Thus,
$$\me
(L_{\Pi_t}|(W_k)_{k\in\naturals})=\sum_{k=1}^\infty\left(e^{-tW_1\cdot
\ldots \cdot W_{k-1}(1-W_k)}-e^{-tW_1\cdot \ldots \cdot
W_{k-1}}\right).$$
Recalling definitions
{\rm (\ref{add_rand_walk_def})},{\rm (\ref{N_def})} and setting
$\varphi(t):=\me e^{-t(1-W)}$,
$U(x):=\me
N_x=\sum_{k=1}^{\infty}\mmp\{S_{k-1}\leq x\}$, we have
\begin{eqnarray}
\me L_{\Pi_t}&=&\me \sum_{k=1}^\infty
\bigg(\varphi(te^{-S_{k-1}})-\exp(-te^{-S_{k-1}})\bigg)\nonumber\\
&=& \int_0^\infty
\bigg(\varphi(te^{-x})-\exp(-te^{-x})\bigg)U({\rm d}x)\label{eq2}\\
&=& \sum_{k=1}^\infty (-1)^{k+1}{t^k\over k!}{(1-\me(1-W)^k)\over
(1-\me W^k)},\label{eq1}
\end{eqnarray}
where the familiar formula for Laplace transform of the potential measure,
$$\int_0^\infty e^{-sx}U({\rm d}x)={1\over
1-\me W^s}, ~~~ s>0,$$
has been utilised.
Note that {\rm (\ref{eq1})} is an obvious counterpart of {\rm (\ref{empty_meam_expl})}.

Set $K(t)=\varphi(e^t)-\exp(-e^t)$, $t\in\mr$. Since $\nu=\infty$
and
$$\int_0^\infty {e^{-z(1-W)}-e^{-z}\over z}{\rm d}z=|\log(1-W)|,$$
we conclude that
\begin{equation}\label{aux}
\lim_{t\to\infty}\int_{-\infty}^t K(z){\rm d}z=\infty.
\end{equation}
Applying a minor extension of \cite[Theorem 5]{Sgibnev} to the equality
\begin{equation}\label{key}
\me L_{\Pi_{e^t}}=\int_0^\infty K(t-x)U({\rm d}x),
\end{equation}
which is equivalent to {\rm (\ref{eq2})}, yields
%Assume for a moment that $\mu<\infty$ and $\nu<\infty$.
%From {\rm (\ref{aux}) we conclude that $K$ is integrable on $\mr$.
%Furthermore, from Section 5 of \cite{GINR} it follows that it is
%directly Riemann integrable. Therefore, applying the key renewal
%theorem on $\mr$ to {\rm (\ref{key}) yields
%$$\me L_{\Pi_{e^t}}\to {\nu\over\mu},$$
%as $t\to\infty$, which proves the second statement of the theorem for the poissonized
%version of the Bernoulli sieve.
%Now we will treat the case of infinite $\nu$ (but still assuming $\mu<\infty$).
%In view of {\rm (\ref{key})
%$$
%\me L_{\Pi_{e^t}}=\int_0^t \varphi(e^{t-x})U({\rm d}x)-\int_0^t e^{-e^{t-x}}U({\rm d}x)+\int_t^\infty K(t-x)U({\rm d}x)=:I_1(t)-I_2(t)+I_3(t),
%$$

%Now, using the key renewal theorem, we will show that $|I_2(t)|$ and $|I_3(t)|$ are bounded:
%\begin{eqnarray*}
%I_3(t)\leq\int_t^\infty (1-e^{-e^{t-x}})U({\rm d}x)\leq\int_t^\infty e^{t-x}U({\rm d}x)<\infty.
%\end{eqnarray*}
%In order to bound $I_3(t)$ from the other side we will make use of inequality $y^{-1}(1-\varphi(y))\leq \me (1-W)<1$.
%\begin{eqnarray*}
%I_3(t)\geq -\int_t^\infty (1-\varphi(e^{t-x}))U({\rm d}x)\geq -\int_t^\infty e^{t-x}U({\rm d}x)>-\infty.
%\end{eqnarray*}
%The boundedness of $|I_2(t)|$ is a consequence of the convergence $I_2(t)\to \frac{1}{\mu}\int_1^{\infty}y^{-1} e^{-y}{\rm d}y,$ as $t\to\infty$, which follows from the key renewal theorem.
%
%In view of {\rm (\ref{aux}) and  arguments above $\int_{0}^{\infty} \varphi(e^{y}){\rm d}y=\infty$.
%
%  (note
%that $\varphi(e^t)$ verify all the assumptions of that theorem),
\begin{eqnarray*}
\me L_{\Pi_{e^t}}\sim {1\over \mu}\int_{0}^t \varphi(e^x){\rm
d}x\sim{1\over \mu}\int_1^{e^t} {\varphi(x)\over x}{\rm d}x.
\end{eqnarray*}

The asymptotics of $\me L_n$ is now obtained by
the depoissonisation Lemma \ref{depoisson} in Appendix.
The lemma
is applicable because  $\me L_{\Pi(t)}$ is slowly varying.
Indeed, slow variation of $\int_1^t \varphi(u){\rm d}u/u$
is checked straightforwardly from $\varphi(t)\downarrow 0$ and the divergence of the integral for $t=\infty$.
\end{proof}

%\subsection{Weak asymptotic behavior of $L_n$.}
Similarly to the above, the proof of the next theorem is based  on the poissonisation technique.

\begin{thm}{\rm \cite{GINR}}\label{thethe}
If $\mu<\infty$ and $\nu<\infty$ then
$L_n\to_d L_{\infty}$ as $n\to\infty$ for some random variable $L_{\infty}$  whose  distribution  satisfies
$$
\mmp\{L_{\infty}\geq i\}=\frac{1}{\mu}\sum_{j=1}^{\infty}\frac{\me W^j}{j}\mmp\{L_j=i\},\;\;i\in\naturals.
$$
Moreover, the convergence of all moments holds, i.e. $\me L_n^k\to \me L^k_{\infty}<\infty$ for  $k\in\naturals$.
\end{thm}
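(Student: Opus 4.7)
The plan is to adapt the poissonisation argument of Theorem~\ref{L_n_mean}(iii). First I would introduce $L_{\Pi_t}$, the empty-box count when the number of balls is replaced by a Poisson$(t)$ variable independent of $(W_k)$. Given the frequencies, the occupancy counts of distinct boxes are independent Poissons with parameters $tp_k$; conditioning on the index $M_{\Pi_t}=k$ of the rightmost occupied box and using the independence of the empty-box indicators among the first $k-1$ boxes, the tail probability expresses as a renewal convolution
$$\mmp(L_{\Pi_t}\geq i)=\int_0^\infty h_i(te^{-x})\,U({\rm d}x),$$
analogous to~(\ref{eq2}), where $U$ is the renewal measure of the walk $(S_k)$ from~(\ref{add_rand_walk_def}) and $h_i$ is a nonnegative function built from finite-dimensional Bernoulli-type probabilities (essentially, the probability that a Poisson-sampled prefix of the sieve leaves at least $i$ boxes empty).

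Next I would apply the key renewal theorem, exactly as in the proof of Theorem~\ref{L_n_mean}(iii). Under $\mu<\infty$ and the non-lattice assumption on $|\log W|$, and provided $\nu<\infty$ ensures direct Riemann integrability of $h_i(e^\cdot)$ (the same integrability mechanism that underlies part~(ii) of Theorem~\ref{L_n_mean}), one obtains
$$\lim_{t\to\infty}\mmp(L_{\Pi_t}\geq i)=\frac{1}{\mu}\int_0^\infty h_i(v)\,\frac{{\rm d}v}{v}.$$
A direct computation then identifies this limit with the series $\mu^{-1}\sum_{j\geq 1}j^{-1}\me W^j\,\mmp(L_j\geq i)$: expand $h_i$ using the binomial formula for the Bernoulli products, switch the order of summation and the $v$-integral, and match coefficients against the Taylor expansion $-\log(1-W)=\sum_{j\geq 1}W^j/j$, which also gives the consistency check $\sum_{j\geq 1}j^{-1}\me W^j=\nu$. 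I expect this identification, together with the verification of direct Riemann integrability of $h_i(e^\cdot)$, to be the main technical obstacle.

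Finally, depoissonise and establish moment convergence. Since $L_n$ is not monotone in~$n$, one cannot sandwich $L_n$ pathwise between Poissonised copies; instead the stochastic subadditivity $L_{n+m}<_d L_n+L'_m$ recalled in Section~2, combined with Poisson concentration of $\Pi_t$ around~$t$, provides a distributional sandwich transferring $L_{\Pi_t}\to_d L_\infty$ to $L_n\to_d L_\infty$, in the same spirit as the depoissonisation lemma invoked for Theorem~\ref{L_n_mean}(iii). For moments I would run the same scheme on the factorial moments $\me L_{\Pi_t}^{(k)}$, each of which expands as a sum over $k$-tuples of boxes of convolutions of the form~(\ref{eq2}); the key renewal theorem gives $\sup_t\me L_{\Pi_t}^k<\infty$, hence $\sup_n\me L_n^k<\infty$ via the sandwich, and the convergence of all moments follows from uniform integrability.
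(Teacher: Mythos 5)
This survey does not actually reproduce a proof of Theorem \ref{thethe} (it is quoted from \cite{GINR}, with only the remark that the argument is based on poissonisation), so your sketch has to stand on its own; as written it has two genuine gaps, both at the decisive steps. The first is the claimed identity $\mmp\{L_{\Pi_t}\geq i\}=\int_0^\infty h_i(te^{-x})\,U({\rm d}x)$ with a kernel depending on the single variable $te^{-x}$. The analogous formula (\ref{eq2}) exists only because expectation is linear: $L_{\Pi_t}$ is a sum over boxes and each summand contributes a term depending on one renewal epoch $S_{k-1}$ alone. A tail probability has no such additive structure. Conditioning on $M_{\Pi_t}=k$ as you propose, the conditional probability that at least $i$ of the boxes $1,\dots,k-1$ are empty is a functional of the whole prefix $(W_1,\dots,W_{k-1})$ --- for large $t$ the empty boxes sit near the end of the occupancy range, so it involves precisely the increments just before $S_{k-1}$ --- and its conditional law given $S_{k-1}=x$ varies with $k$, so it cannot be absorbed into a fixed function $h_i(te^{-x})$. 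Since $h_i$ is never constructed, the key renewal theorem, the direct Riemann integrability discussion, and the ``direct computation'' (which you yourself call the main obstacle) have no starting point; note also that the series you announce as the outcome, $\mu^{-1}\sum_{j\geq1}j^{-1}\me W^j\,\mmp\{L_j\geq i\}$, is not the right-hand side appearing in the theorem, which is further evidence that the identification step has not actually been carried out.

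The second gap is the depoissonisation. From $L_{n+m}<_d L_n+L'_m$ with $m\asymp\varepsilon n$ the error term is $L'_m$ with $m\to\infty$, and under the hypotheses of the theorem $L'_m$ converges in distribution to a copy of $L_\infty$, not to $0$, no matter how small $\varepsilon$ is: the ``distributional sandwich'' never closes. This is exactly where your situation differs from Lemma \ref{depoisson}, whose error term $\me\bigl(M_{[(1+\varepsilon)t]}-M_{[(1-\varepsilon)t]}\bigr)$ vanishes in the double limit by Lemma \ref{cruc} because $M_n$ is monotone and grows on the $\log n$ scale; the same defect undermines your route to $\sup_n\me L_n^k<\infty$. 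A transfer that does work is to write $L=M-K$, use monotonicity of $M_n$ and $K_n$ together with the Poisson concentration bound (\ref{ba}) and Lemma \ref{cruc} (plus its analogue for $K_n$, available here since $\nu<\infty$) to get $M_{\Pi_n}-M_n\tp0$ and $K_{\Pi_n}-K_n\tp0$. Alternatively one can bypass poissonisation of the law altogether: $L_n$ is the number of zero jumps of the chain $Q^*_n$ at positive states, and decomposing over the state $j$ at which a zero jump occurs (strong Markov property) gives the exact identity $\mmp\{L_n\geq i\}=\sum_{j=1}^n \me\bigl[\#\{\text{zero jumps of }Q^*_n\text{ at }j\}\bigr]\,\mmp\{L_j=i-1\}$, whose coefficients equal $g(n,j)\,\me W^j/(1-\me W^j)$ and converge by (\ref{g_conv}) to $\me W^j/(\mu j)$; a domination argument (this is where $\nu<\infty$ enters, since $\sum_j j^{-1}\me W^j=\nu$) then yields the limit law --- watch the $i$ versus $i-1$ bookkeeping when matching it to the displayed formula --- and, with some extra work, uniform moment bounds and hence convergence of all moments.
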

It is also known that if $\mu<\infty$ and $\nu=\infty$ then
$L_{n}\to_d\infty$ (see \cite{GIR}), and that $L_n\to_P 0$ if
$\nu<\infty$ and $\mu=\infty$. In  the cases not covered by these
results the question about the weak convergence of $L_n$ is open.

Note that Theorem \ref{thethe} only gives implicit specification of the limit law through distributions of $L_n$'s,
which are not easy to determine, with one remarkable exception.
Obviously from the recursive construction of the Bernoulli sieve, the distribution of   $L_1$  is geometric
with parameter $\me W$.
Curiously, the same is true for all $n$ provided the
law of $W$ is symmetric about the midpoint $1/2$.
\begin{assertion}\label{empty_sym}
If  $W=_d 1-W$ then $L_n$ is geometrically distributed with
parameter $1/2$ for all $n\in\naturals$.
\end{assertion}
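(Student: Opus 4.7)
The plan is to argue by induction on $n$ via the distributional recursion~(\ref{empty}), working with the probability generating functions $\phi_n(s) := \me\, s^{L_n}$. Since the geometric$(1/2)$ law on $\naturalso$ has generating function $1/(2-s)$, the statement reduces to showing $\phi_n(s) = 1/(2-s)$ for every $n \in \naturals$ and every $s \in [0,1]$.

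For the base case $n = 1$ I would compute directly: the single ball lands in box $k+1$ with probability $(\me W)^k \me(1-W)$, in which case $L_1 = k$. The symmetry hypothesis forces $\me W = \me(1-W) = 1/2$, hence $\mmp\{L_1 = k\} = 2^{-(k+1)}$, which is the geometric$(1/2)$ distribution.

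For the induction step I would apply~(\ref{empty}), using the stated independence of $Q^\ast_n(1)$ from the family $(L_m)_{m \geq 0}$, to obtain
$$
\phi_n(s) \;=\; \sum_{m=0}^{n-1} q^\ast(n,m)\, \phi_m(s) \,+\, q^\ast(n,n)\, s\, \phi_n(s),
$$
where $\phi_n$ reappears on the right because $\mmp\{Q^\ast_n(1) = n\} = \me W^n > 0$. The plan is then to isolate $\phi_n(s)$, substitute $\phi_0(s) = 1$ and the induction hypothesis $\phi_m(s) = 1/(2-s)$ for $1 \leq m \leq n-1$, and use $\sum_{m=0}^{n} q^\ast(n,m) = 1$ together with $q^\ast(n,0) = \me(1-W)^n$ and $q^\ast(n,n) = \me W^n$. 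A short algebraic simplification brings the target identity to
$$
\phi_n(s)\bigl(1 - s\, \me W^n\bigr) \;=\; \frac{(1-s)\, \me(1-W)^n \,+\, 1 - \me W^n}{2-s}.
$$
The symmetry hypothesis $W =_d 1 - W$ is then used exactly once, to replace $\me(1-W)^n$ by $\me W^n$ in the numerator; this collapses the right-hand side to $(1 - s\, \me W^n)/(2-s)$. Dividing through by the factor $1 - s\, \me W^n$, which is strictly positive on $s \in [0,1]$ because $\mmp\{W=1\}=0$ implies $\me W^n < 1$, closes the induction.

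The main technical wrinkle, rather than a genuine obstacle, is the self-reference in the recursion caused by the atom of $Q^\ast_n(1)$ at $m = n$: the unknown $\phi_n(s)$ must be separated from both sides before the induction hypothesis is usable. Once that is done the argument is purely algebraic, and the symmetry of $W$ enters in a single, decisive way through the identity $q^\ast(n,0) = q^\ast(n,n)$; the rest of the structure would survive under any law of $W$.
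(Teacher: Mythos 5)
Your proposal is correct and takes essentially the same route as the paper: induction on $n$ through the recursion (\ref{empty}), with the symmetry of $W$ entering only through $q^*(n,0)=q^*(n,n)$, which is exactly the paper's identity (\ref{eq_n_0}). The only difference is bookkeeping — you resolve the self-referential atom at $Q^*_n(1)=n$ by solving the resulting linear equation for the generating function $\me\, s^{L_n}$, whereas the paper handles the same term by an inner induction on the value $k$ of $L_n$ — and both work; note in passing that your separate base case $n=1$ is already subsumed by the induction step (the sum over $1\leq m\leq n-1$ being empty).
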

\begin{proof}
The argument is based on the recursion {\rm (\ref{empty})} for marginal
distributions of the $L_n$'s.
The symmetry $W=_d 1-W$ yields $\me
W^k=\me (1-W)^k$ for all $k\in\naturals$ and
\begin{equation}\label{eq_n_0}
\mmp\{Q^*_n(1)=n\}=\mmp\{Q^*_n(1)=0\}
\end{equation}
for all $n\in\naturals$. We will show by induction on
$n$ that
$\mmp\{L_n=k\}=2^{-k-1}$ for all $k\in\naturalso$.
Using {\rm (\ref{empty})}  and {\rm (\ref{eq_n_0})}
we obtain
\begin{eqnarray*}
\mmp\{L_n=0\} & = &\mmp\{Q^*_n(1)=0\}+\sum_{k=1}^{n-1}\mmp\{L_k=0\}\mmp\{Q^*_n(1)=k\}\\
              & = &\mmp\{Q^*_n(1)=0\}+\frac{1}{2}\Big(1-2\mmp\{Q^*_n(1)=0\}\Big)=\frac{1}{2},
\end{eqnarray*}
by the  induction hypothesis. Assuming now that $\mmp\{L_n=i\}=2^{-i-1}$ for all $i<k$ we have
\begin{eqnarray*}
\mmp\{L_n=k\} & = & \sum_{j=1}^{n-1}\mmp\{Q^*_n(1)=j\}\mmp\{L_j=k\}+\mmp\{Q^*_n(1)=n\}\mmp\{L_n=k-1\}\\
                      & = &  2^{-k-1}\Big(1-2\mmp\{Q^*_n(1)=0\}\Big)+\mmp\{Q^*_n(1)=0\}2^{-k}=2^{-k-1},
\end{eqnarray*}
and the proof is complete.
\end{proof}

Alternatively, one can use  a representation of $L_n$ through the sojourns
of the Markov chain $Q^*_n$ in positive states.
Indeed,  recall that $L_1$ has geometric distribution with parameter $\me W$.
Then using
{\rm (\ref{eq_n_0})}
and induction it can be checked that the distribution of $L_n$ does not depend on $n\geq 1$.

\section{Appendix.}

%\subsection{Toeplitz-Schur theorems}

For ease of reference we include a result
due to Toeplitz and Schur (see  \cite{DivSer}, Theorem 2 on p.~ 43 and Theorem 9 on p.~
52). We rewrite it in a  form suitable for our
purposes.
%Proposition \ref{Toeplitz1} is a combination of Theorem
%2 on p.~ 43 and of Theorem 9 on p.~ 52 both in \cite{DivSer}.
%Proposition \ref{Toeplitz2} is a slight modification of Theorem 1
%on p.~ 43 and Theorem 3 on p.~ 44 of the same monograph.
\begin{lemma}\label{Toeplitz1}
Let $\{s_n,n\in\mathbb{N}\}$ be any sequence of real numbers
and let $\{c_{nm},~n,m\in\naturals\}$ be a nonnegative array.
Define another sequence $\{t_n,n\in\mathbb{N}\}$ by
$t_n=\sum_{m=1}^{n}c_{nm}s_m$. If
\begin{itemize}
%\item[\rm(i)] For all $n,m\in\mathbb{N}$, $c_{nm}\geq 0$,
\item[\rm(i)] $\lim_{n\to\infty}c_{nm}=0$ for all $m$,
\item[\rm(ii)] $\,\lim_{n\to\infty}\sum_{m=1}^{n}c_{nm}=1$,
\end{itemize}
then
$$\liminf_{n\to\infty}{s_n}\leq\liminf_{n\to\infty}{t_n}\leq\limsup_{n\to\infty}{t_n}\leq\limsup_{n\to\infty}{s_n}\leq +\infty.$$
\end{lemma}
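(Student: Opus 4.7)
The plan is to establish the upper bound $\limsup_n t_n \leq \limsup_n s_n$; the matching lower bound $\liminf_n s_n \leq \liminf_n t_n$ will then follow by applying the same argument to the sequence $\{-s_n\}$, since the hypotheses on $(c_{nm})$ are invariant under this replacement and $\liminf_n(-x_n) = -\limsup_n x_n$. The middle inequality $\liminf_n t_n \leq \limsup_n t_n$ is automatic.

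For the upper bound, set $S^* := \limsup_n s_n$ and first treat the generic case in which $S^*$ is finite. Given $\varepsilon > 0$, I would choose $M$ with $s_m \leq S^* + \varepsilon$ for all $m \geq M$ and split
$$t_n = \sum_{m=1}^{M-1} c_{nm} s_m + \sum_{m=M}^{n} c_{nm} s_m.$$
The head is a fixed finite linear combination of the entries $c_{n1},\ldots,c_{n,M-1}$, each of which tends to $0$ by hypothesis (i), so the head vanishes as $n \to \infty$. For the tail, nonnegativity of $c_{nm}$ converts the pointwise bound into
$$\sum_{m=M}^{n} c_{nm} s_m \leq (S^* + \varepsilon) \sum_{m=M}^{n} c_{nm},$$
and the latter sum differs from $\sum_{m=1}^{n} c_{nm}$ by the (vanishing) head masses, hence tends to $1$ by hypothesis (ii). Consequently $\limsup_n t_n \leq S^* + \varepsilon$, and letting $\varepsilon \downarrow 0$ completes the generic case.

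Two boundary cases remain. If $S^* = +\infty$ the inequality is vacuous. If $S^* = -\infty$, I would replace $S^* + \varepsilon$ by an arbitrary $-N < 0$, note that $s_m \leq -N$ holds for all $m$ beyond some index $M_N$, and run the same head/tail split to get $\limsup_n t_n \leq -N$ for every $N$, hence $\limsup_n t_n = -\infty$.

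The only subtle point is the use of nonnegativity of the array $c_{nm}$ when passing from $s_m \leq S^* + \varepsilon$ to the weighted inequality; this is where the sign hypothesis is essential, and correspondingly where a careless extension to signed arrays would fail. Beyond this the argument is routine Ces\`aro-type summability reasoning and no substantive obstacle is expected.
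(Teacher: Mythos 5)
Your argument is correct. Note that the paper does not prove this lemma at all: it is quoted as a known result of Toeplitz and Schur, with a reference to Hardy's \emph{Divergent Series} (Theorem 2, p.~43 and Theorem 9, p.~52), so there is no in-text proof to compare against. Your head/tail decomposition is the standard proof of regularity for nonnegative Toeplitz arrays, and all the delicate points are handled properly: the head vanishes by (i) because it involves only finitely many entries, the tail weights sum to something tending to $1$ by (ii) minus the head masses, nonnegativity of $c_{nm}$ is invoked exactly where it is needed to pass from $s_m\leq S^*+\varepsilon$ to the weighted bound, the limit of the product causes no sign issue since the weight sum converges to $1$, the two infinite boundary cases are dispatched correctly, and the lower bound follows by the symmetry $s_n\mapsto -s_n$. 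This gives a self-contained elementary substitute for the citation, at the usual cost of reproving a classical summability fact rather than invoking it.
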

%\begin{assertion}\label{Toeplitz2}
%Let $\{s_n,n\in\mathbb{N}\}$ be any sequence of real numbers. Define new sequence $\{t_n,n\in\mathbb{N}\}$ via $t_n=\sum_{m=1}^{n}c_{nm}s_m,\;n\in\mathbb{N}$.
%If
%\begin{itemize}
%\item There exists $K>0$ such that for all $n$ :  $\sum_{m=1}^{n}|c_{nm}|<K$,
%\item For all m $\lim_{n\to\infty}c_{nm}=\delta_m$,
%\item $\lim_{n\to\infty}\sum_{m=1}^{n}c_{nm}=\delta$,
%\end{itemize}
%then $\sum_{m=1}^{\infty}\delta_m$ is absolutely convergent, and
%$\lim_{n\to\infty} s_n = s\in \mathbb{R}$ implies
%$$
%\lim_{n\to\infty} t_n = t = s(\delta - \sum_{m=1}^{\infty}\delta_m) + \sum_{m=1}^{\infty}\delta_m s_m,
%$$
%moreover if the series $\sum_{m=1}^{n}|c_{nm}|$ converges
%uniformly in $n$ then
%$$\lim_{n\to\infty}t_n=\sum_{n=1}^{\infty}\delta_n s_n $$
%for all bounded sequences $(s_n)$.
%\end{assertion}

%\subsection{Depoissonization.}

Now we  address the issue of depoissonisation.

\begin{lemma}\label{depoisson}
If $\,\me L_{\Pi_t}$ is slowly varying and $\liti \me L_{\Pi_t}\in(0,\infty]$ then
%$$\me K_{n,0}\sim \me K_{\Pi_n,\,0}.$$
$$\me L_{n}\sim \me L_{\Pi_n}, ~~{\rm as~~}n\to\infty.$$
\end{lemma}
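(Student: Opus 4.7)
My plan is a standard Poissonisation--depoissonisation argument: exploit the concentration of $\Pi_n$ about its mean $n$ together with the stochastic subadditivity of $(L_n)$ established in Section~2 to compare $\me L_n$ with $\me L_{\Pi_n}$, and close the argument via the slow variation of $f(t):=\me L_{\Pi_t}$.

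Set $a_n:=\me L_n$. The stochastic subadditivity $L_{n+m}<_d L_n+L'_m$ yields $a_{n+m}\le a_n+a_m$, whence the linear growth $a_n\le n\,a_1$ and, by iteration, the two-sided comparability $\tfrac13 a_n\le a_k\le 3 a_n$ for all $k\in[n/2,2n]$. Writing $f(n)=\sum_k a_k\mmp\{\Pi_n=k\}$ and splitting the sum at $|k-n|\le \varepsilon n$ versus $|k-n|>\varepsilon n$, standard Chernoff bounds $\mmp\{|\Pi_n-n|>\varepsilon n\}\le 2e^{-c_\varepsilon n}$ combined with the linear bound on $a_k$ make the tail contribution bounded by $a_1\,\me[\Pi_n\mathbf 1_{\{|\Pi_n-n|>\varepsilon n\}}]=O(ne^{-c_\varepsilon n})=o(1)$, while on the bulk the comparability gives $\tfrac13 a_n(1-o(1))\le f(n)\le 3a_n(1+o(1))$. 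This already delivers the qualitative bound $f(n)\asymp a_n$, and, in the bounded case $\liti f(t)=c\in(0,\infty)$, settles the claim: $(a_n)$ is then bounded, the family $(a_{\Pi_n})_n$ is uniformly integrable, and dominated convergence with $\Pi_n/n\to 1$ in probability yields $a_n\to c$.

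The main obstacle is upgrading $f(n)\asymp a_n$ to the sharp equivalence $f(n)\sim a_n$ in the unbounded case $\liti f(t)=\infty$: since $(a_n)$ is not known to be monotone, the usual Tauberian sharpening via monotone interpolation at scales $(1\pm\varepsilon)n$ is unavailable. I would proceed by normalising $r_n:=a_n/f(n)$ (which eventually lies in $[\tfrac13,3]$) and recasting the Poisson identity as
$$1=\sum_k r_k\,\frac{f(k)}{f(n)}\,\mmp\{\Pi_n=k\}.$$
Slow variation of $f$ gives $f(k)/f(n)\to 1$ uniformly on $|k-n|\le\varepsilon n$ (by Potter's bounds), reducing the identity to the asymptotic averaging relation $\sum_{|k-n|\le\varepsilon n}r_k\mmp\{\Pi_n=k\}\to 1$. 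The one-sided subadditivity estimates $a_k\le a_n+a_{k-n}$ for $k\ge n$ and $a_k\ge a_n-a_{n-k}$ for $k\le n$ constrain the local oscillations of $(r_k)$ near $n$, and extracting subsequences along which $r_n$ attains its $\limsup$ and $\liminf$ and inserting these into the averaging relation forces both values to equal $1$. This pointwise conclusion delivers $a_n\sim f(n)$, completing the depoissonisation.
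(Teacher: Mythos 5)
Your tail estimate (Chernoff bound plus the linear bound $\me L_k\le k\,\me L_1$ coming from stochastic subadditivity) matches the paper's treatment of the large-deviation part, but the core of your argument has two genuine gaps. First, the claimed two-sided comparability $\tfrac13 \me L_n\le \me L_k\le 3\,\me L_n$ for $k\in[n/2,2n]$ does not follow from subadditivity: subadditivity only gives $\me L_k\le \me L_n+\me L_{k-n}$ for $k>n$, and $\me L_{k-n}$ need not be small compared with $\me L_n$; moreover a subadditive sequence can drop abruptly (only upward increments are controlled, by $\me L_1$ per step), so no fixed comparability constant on dyadic windows is available, and $L_n$ itself is not monotone. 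Second, and more importantly, the decisive step of your plan — upgrading the averaging relation $\sum_{|k-n|\le\varepsilon n}r_k\,\mmp\{\Pi_n=k\}\to1$ to $r_n\to1$ — is exactly where the difficulty of the lemma lies, and the proposed oscillation control via $\me L_k\le \me L_n+\me L_{k-n}$ and $\me L_k\ge \me L_n-\me L_{n-k}$ is too weak: in the relevant regime $\me L_{\Pi_t}$ is slowly varying (e.g. $\me L_n\asymp\log^{1-\gamma}n$ as in the Example), so for $|k-n|$ of order $\varepsilon n$, or even $\sqrt n$, the term $\me L_{|k-n|}$ is of the same order as $\me L_n$; these inequalities then only reproduce comparability and cannot force $\limsup r_n=\liminf r_n=1$. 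The bounded case is also not closed: applying dominated convergence to $\me L_{\Pi_n}$ with $\Pi_n/n\to1$ presupposes that $k\mapsto\me L_k$ has a limit along $\Pi_n$, i.e. precisely the regularity you are trying to prove.

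The paper closes this gap by a different device, which you never invoke: write $L_n=M_n-K_n$, where $M_n$ and $K_n$ are both nondecreasing in $n$. On the event $\{|\Pi_t-t|\le\varepsilon t\}$ this yields the sandwich
$$\me L_{[(1+\varepsilon)t]}-\me\bigl(M_{[(1+\varepsilon)t]}-M_{[(1-\varepsilon)t]}\bigr)\;\le\;\me L_{\Pi_t}1_{\{|\Pi_t-t|\le\varepsilon t\}}\;\le\;\me L_{[(1-\varepsilon)t]}+\me\bigl(M_{[(1+\varepsilon)t]}-M_{[(1-\varepsilon)t]}\bigr),$$
so the needed oscillation control concerns $\me M_n$, not $\me L_n$. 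That control is the content of the auxiliary Lemma \ref{cruc}: using $M_n=N_{T_n}$, the decomposition $T_n=h(n)+G_n$ with $(G_n)$ an $L_2$-bounded martingale, Doob's inequality and Blackwell's renewal theorem, one gets $\lim_{\varepsilon\downarrow0}\liti\me\bigl(M_{[(1+\varepsilon)t]}-M_{[(1-\varepsilon)t]}\bigr)=0$. Slow variation of $\me L_{\Pi_t}$ is then used only at the very end, to replace $\me L_{\Pi_{n/(1\pm\varepsilon)}}$ by $\me L_{\Pi_n}$ in the unbounded case. If you want to salvage your scheme, you would need an analogous renewal-theoretic ingredient supplying genuine local regularity; subadditivity and slow variation of the poissonised mean alone do not provide it.
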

\begin{proof}
For any fixed $\varepsilon\in (0,1)$,
\begin{eqnarray*}
\me L_{\Pi_t}&=&\me L_{\Pi_t}1_{\{|\Pi_t-t|> \epsilon t\}}+
\me L_{\Pi_t}1_{\{|\Pi_t-t|\leq \epsilon t\}} =: A(t)+B(t).
\end{eqnarray*}
Sublinearity of $\me L_{\Pi_t}$
and the elementary large deviation bound for the Poisson distribution \cite{Bah},
\begin{equation}\label{ba}
\mmp\{|\Pi_t-t|>\epsilon t\}<
c_1e^{-c_2t}, \ t>0
\end{equation}
with some $c_1, c_2>0$, yield $A(t)\to 0$.

%On several occasions we shall use the asymptotic formula
%for the  harmonic numbers, $h(i):=\sum_{j=1}^i 1/i$=\log i+\gamma+o(1)$,  where $\gamma$ is Euler's constant.

%Using the representation $M_n=N_{E_{n,n}}$ along with the well-known
%fact from the renewal theory
%\begin{equation}\label{re}
%U(t)\leq at+b, \ \ t>0,
%\end{equation}
%where $a$, $b$ are some positive constants, we obtain
%$$\me M_n\leq a\me E_{n,n}+b=a h(n)+b.$$ Thus, for large enough $n$ and some $c>0$
%\begin{equation}\label{inter}
%\me L_n\leq \me M_n\leq c h(n).
%\end{equation}
%Since $M_n$ is non-decreasing, we conclude that $$A(t)\leq \me
%M_{[(1-\varepsilon)t]}\mmp\{\Pi_t<[(1-\varepsilon)t]\},$$
%and it follows from
%{\rm (\ref{ba}) and {\rm (\ref{inter})  that $\liti A(t)=0$.

%To deal with $B(t)$ we use a rough estimate implied by Chebyshev's
%inequality:
%\begin{equation}\label{inter2}
%\mmp\{\Pi_t\geq [(1+\varepsilon)t]+i\}\leq {t\over (\varepsilon
%t+i-1)^2}, \ \ i\in\naturals.
%\end{equation}
%Using {\rm (\ref{inter}) yields
%\begin{eqnarray*}
%B(t)&\leq& c\sum_{i=[(1+\varepsilon)t]+1}^\infty h(i)\mmp\{\Pi_t=i\}= c h([(1+\varepsilon)t]+1)\mmp\{\Pi_t>[(1+\varepsilon)t]\}\\
%&+& c\sum_{i=[(1+\varepsilon)t]+2}^\infty {1\over i}\mmp\{\Pi_t\geq i\}=:B_1(t)+B_2(t).
%\end{eqnarray*}
%By {\rm (\ref{ba}), $\liti B_1(t)=0$. In view of
%$$B_2(t)\leq {c\over
%[(1+\varepsilon)t]+2}\sum_{i=2}^\infty\mmp\{\Pi_t\geq
%[(1+\varepsilon)t]+i\},$$ using {\rm (\ref{inter2}) we conclude
%that $B_2(t)=O(1/t)$. Thus we have proved that $\liti B(t)=0$.

It remains to evaluate $B(t)$. Since both $M_n$ and $K_n$ are
non-decreasing, we have
\begin{eqnarray*}
B(t)=\me (M_{\Pi_t}-K_{\Pi_t})1_{\{|\Pi_t-t|\leq \varepsilon t\}}\leq \me L_{[(1-\varepsilon)t]}+\me (M_{[(1+\varepsilon)t]}-M_{[(1-\varepsilon)t]}).
\end{eqnarray*}
Similarly, $B(t)\geq \me L_{[(1+\varepsilon)t]}-\me(M_{[(1+\varepsilon)t]}-M_{[(1-\varepsilon)t]}).$
To proceed, we need an auxiliary lemma.

\begin{lemma}\label{cruc}
$\lim_{\varepsilon\downarrow 0} \liti \me
(M_{[(1+\varepsilon)t]}-M_{[(1-\varepsilon)t]})=0$.
\end{lemma}
\begin{proof}
Let $\{V_i:i\in\naturals\}$ be the sequence of independent exponentially distributed random
variables with $\me V_i=1/i$. The sequence $G_0:=0,
(G_n:=V_1+\ldots+V_n-h(n))_{n\in\naturals}$ is an $L_2$-bounded
martingale with respect to the natural filtration,
where $h(n)=\sum_{j=1}^n 1/j$.
 This implies
that $G_n$ converges almost surely to some random variable $G$,
with $\me G=0$. By Doob's inequality
\begin{equation}\label{do}
\me (\sup_{n\in\naturalso} |G_n|)^2<\infty.
\end{equation}

Recalling the notation $T_n=\max(E_1,\dots,E_n)$,
\begin{eqnarray*}
D(t)&:=& \me \bigg(M_{[(1+\varepsilon)t]}-M_{[(1-\varepsilon)t]}\bigg)= \me \bigg(U(T_{[(1+\varepsilon)t]})-U(T_{[(1-\varepsilon)t]})\bigg)\\
&=& \me \bigg(U(h([(1+\varepsilon) t])+G_{[(1+\varepsilon )t]})-U(h([(1+\varepsilon) t]))\bigg)\\
&+& \bigg(U(h([(1+\varepsilon )t]))-U(h([(1-\varepsilon) t]))\bigg)\\
&-& \me \bigg(U(h([(1-\varepsilon )t])+G_{[(1-\varepsilon )t]})-U(h([(1-\varepsilon)t]))\bigg)
=:\me D_1(t)+D_2(t,\varepsilon)-\me D_3(t).
\end{eqnarray*}
Recall that Blackwell's renewal theorem states that $$\lim_{x\to\infty}
\bigg(U(x+y)-U(x)\bigg)={y\over \mu},$$
where the convergence is locally uniform in $y\in\mr$.
From this we conclude that $\liti
D_2(t,\varepsilon)=\log{1+\varepsilon\over 1-\varepsilon}$. Hence,
$$\lim_{\varepsilon\downarrow 0} \liti D_2(t,\varepsilon)=0.$$
By the same argument
$\liti D_1(t)={G/\mu}$
almost surely.
Now we want to show that $U(\sup_{n\in\naturalso} |G_n|)$ is an
integrable majorant for $|D_1(t)|$. To this end, we use
subadditivity and monotonicity of $U$:
\begin{eqnarray*}
|D_1(t)|&=&D_1(t)1_{\{G_{[(1+\varepsilon)t]}\geq
0\}}+(-D_1(t))1_{\{G_{[(1+\varepsilon)t]}<0\}}\\
&\leq& U(G_{[(1+\varepsilon )t]})1_{\{G_{[(1+\varepsilon)t]}\geq
0\}}+U(-G_{[(1+\varepsilon)t]})1_{\{G_{[(1+\varepsilon)t]}< 0\}}= U(|G_{[(1+\varepsilon)t]}|)\leq U(\sup_{n\in\naturalso} |G_n|).
\end{eqnarray*}

An appeal to estimate $U(x)<ax+b$ (with some $a,b>0$) and {\rm (\ref{do})} allows us to conclude that
$$\me U(\sup_{n\in\naturalso}|G_n|)\leq a\me \sup_{n\in\naturalso} |G_n|+b<\infty.$$ Consequently,
invoking
the dominated convergence we show  that
$\liti \me D_1(t)=0$. Along the same lines
 $\liti \me
D_3(t)=0$ is shown. The proof is complete.
\end{proof}

We are ready to finish the proof.
Assume first that $\liti \me L_{\Pi_t}=c\in(0,\infty)$. Letting $n\to\infty$  then $\varepsilon\to 0$
in the inequality
\begin{equation}\label{5}
\me L_{\Pi_{n/(1-\varepsilon)}}\leq
A(n/(1-\varepsilon))+ \me L_n+\me
(M_{[{1+\varepsilon\over 1-\varepsilon}n]}-M_n),
\end{equation}
we obtain $\lim\inf_{n\to\infty}\,\me L_n\geq
c$. The upper bound follows in the same way from the
inequality
\begin{equation}\label{678}
\me L_{\Pi_{n/(1+\varepsilon)}}\geq \me L_n-\me
(M_n-M_{[{1-\varepsilon\over 1+\varepsilon}n]}).
\end{equation}

In the case $\liti \me L_{\Pi_t}=\infty$, divide inequalities
{\rm (\ref{5})} and {\rm (\ref{678})} by $\me L_{\Pi_n}$ and let $n$ go to
$\infty$ keeping in mind that by slow variation $\lin {\me L_{\Pi_{\delta n}}\over
\me L_{\Pi_n}}=1$ for every $\delta>0$.
\end{proof}

\nocite{*}
\bibliographystyle{abbrv}
\bibliography{bibliography}
\end{document}